\newtheorem{teo}{Theorem}[section]
\newtheorem{lema}{Lemma}[section]
\newtheorem{cor}{Corollary}[section]
\newtheorem{propo}{Proposition}[section]
\theoremstyle{definition}
\newtheorem{defi}{Definition}[section]
\newtheorem{rek}{Remark}[section]
\newcommand{\supp}{\operatorname{supp}}
\newcommand{\diam}{\operatorname{diam}}
\newcommand{\fix}{\operatorname{Fix}}
\newcommand{\essinf}{\operatorname{ess\,inf}}
\newcommand{\esssup}{\operatorname{ess\,sup}}
\newcommand{\card}{\operatorname{card}}
\newenvironment{proof01}[1][\textit{\textbf{Proof \em(Theorem~\ref{torigual0})}}]{\textit{#1.} }{\hfill $\Box$}
\newenvironment{proof02}[1][\textit{\textbf{Proof \em(Corollary~\ref{quasiyoung})}}]{\textit{#1.} }{\hfill $\Box$}
\newenvironment{proof03}[1][\textit{\textbf{Proof \em(Theorem~\ref{BKpunctually})}}]{\textit{#1.} }{\hfill $\Box$}
\newenvironment{proof1.1}[1][\textit{\textbf{Proof \em(Theorem~\ref{MU})}}]{\textit{#1.} }{\hfill $\Box$}
\newenvironment{proof3}[1][\textit{\textbf{Proof \em(Theorem~\ref{allzerodimensions})}}]{\textit{#1.} }{\hfill $\Box$}
\newenvironment{proof4}[1][\textit{\textbf{Proof \em(Theorem~\ref{allzerodimensions1})}}]{\textit{#1.} }{\hfill $\Box$}
\newcommand{\ve}{\varepsilon}
\newcommand{\N}{\mathbb{N}}
\newcommand{\R}{\mathbb{R}}
\newcommand{\Z}{\mathbb{Z}}
\newcommand{\M}{\mathcal{M}}
\begin{document}
\bibliographystyle{plain}

\title {
A note on the relation between the  metric entropy and the generalized fractal dimensions of invariant measures}
\date{}

\author{Alexander Condori \thanks{ Work  partially  supported  by  CIENCIACTIVA C.G. 176-2015}~~and~ Silas L. Carvalho \thanks{Work partially supported by FAPEMIG (a Brazilian government agency; Universal Project 001/17/CEX-APQ-00352-17)}}
\maketitle

{\small \noindent $^{*}\,$Instituto de Matem\'atica y Ciencias Afines (IMCA-UNI). Calle Los Bi\'ologos 245.
Lima 15012
Per\'u. \\ {\em e-mail:
  a.condori@imca.edu.pe }}

{ \small \noindent $\dag\,$Instituto de Ciências Exatas (ICEX-UFMG). Av. Pres. Antônio Carlos 6627, Belo Horizonte-MG, 31270-901, Brasil. \\ {\em e-mail:
silas@mat.ufmg.br }}

\maketitle

\begin{abstract}
\noindent{
  We investigate in this work some situations where it is possible to estimate or determine the upper and the lower $q$-generalized fractal dimensions $D^{\pm}_{\mu}(q)$, $q\in\R$, of invariant measures associated with continuous transformations over compact metric spaces. In particular, we present an alternative proof of Young's Theorem~\cite{Young} for the generalized fractal dimensions of the Bowen-Margulis measure associated with a $C^{1+\alpha}$-Axiom A system over a two-dimensional compact Riemannian manifold $M$.  We also present estimates for the generalized fractal dimensions of an ergodic measure for which Brin-Katok's Theorem is satisfied punctually, in terms of its  metric entropy.

  Furthermore, for expansive homeomorphisms (like $C^1$-Axiom A systems), we show that the set of invariant measures such that $D_\mu^+(q)=0$ ($q\ge 1$), under a hyperbolic metric, is generic (taking into account the weak topology). We also show that for each $s\in[0,1)$, $D^{+}_{\mu}(s)$ is bounded above, up to a constant, by the topological entropy, also under a hyperbolic metric.

Finally, we show that, for some dynamical systems, the metric entropy of an invariant measure is typically zero, settling a conjecture posed by Sigmund in~\cite{Sigmund1974} for Lipschitz transformations which satisfy the specification property.
}
\noindent
\end{abstract}
{Key words and phrases}.  {\em Expansive homeomorphisms, Hausdorff dimension, packing dimension, invariant measures, generalized fractal dimensions, dynamical systems with specification}
\section{Introduction}

The dimension theory of invariant measures plays a very  important role in the theory of dynamical systems.

There are several different notions of dimension for more general sets, some easier to compute and others more convenient in applications. One of them is, and could be said to be the most popular of all, the Hausdorff dimension, introduced in 1919 by Hausdorff, which gives a notion of size useful for distinguishing between sets of zero Lebesgue measure. 

Unfortunately, the  Hausdorff dimension of relatively simple sets can be very hard to calculate; besides, the notion of Hausdorff dimension is not completely adapted to the dynamics per se (for instance, if $Z$ is a periodic orbit, then its Hausdorff  dimension is zero, regardless to whether the orbit is stable, unstable, or neutral). This fact led to the introduction of other characteristics for which it is possible to estimate the size of irregular sets. For this reason, some of these quantities were also branded as ``dimensions'' (although some of them lack some basic properties satisfied by Hausdorff dimension, such as $\sigma$-stability; see \cite{Mattila}). Several good candidates were proposed, such as the correlation, information, box counting and entropy dimensions, among others.

Thus, in order to obtain relevant information about the dynamics, one should consider not only the geometry of the measurable set $Z\subset X$ (where $X$ is some Borel measurable space), but also the distribution of points on $Z$ under $f$ (which is assumed to be a measurable transformation). That is, one should be interested in how often a given point $x \in Z$ visits a fixed subset $Y\subset Z$ under $f$. If $\mu$ is an ergodic measure for which $\mu(Y)>0$, then for a typical point $x\in Z$, the average number of visits is equal to $\mu(Y)$. Thus, the orbit distribution is completely determined by the measure $\mu$. On the other hand, the measure $\mu$ is completely specified by the distribution of a typical orbit.

This fact is widely used in the numerical study of dynamical systems where the distributions are, in general, non-uniform and have a clearly visible fine-scaled interwoven structure of hot and cold spots, that is, regions where the frequency of visitations is either much greater than average or much less than average respectively.

In this direction, the so-called correlation dimension of a probability measure was introduced by Grassberger, Procaccia and Hentschel \cite{Grassberger}  in an attempt to produce a characteristic of a dynamical system that captures information about the global behavior of typical (with respect to an invariant measure) trajectories by observing only one them.

This dimension plays an important role in the numerical investigation  of chaotic behavior in different models, including strange attractors. The formal definition  is as follows (see~\cite{PesinT1995,Pesin1993,Pesin1997}): let $(X,r)$ be a complete and separable (Polish) metric space, and let $f:X\rightarrow X$ be a continuous map. Given $x\in X$, $\ve>0$  and $n\in\N$, one defines the correlation sum of order $q\in\N\setminus\{1\}$ (specified by the points $\{f^i(x)\}$, $i=1,\ldots,n$) by
\begin{equation*}
C_q(x,n,\ve)=\frac{1}{n^q}\,\card\,\{(i_1\cdots i_q)\in \{0,1,\cdots, n\}^q\mid r(f^{i_j}(x),f^{i_l}(x))\leq \ve ~ \mbox{ for any } ~ 0\leq j,l\leq q\},
\end{equation*}
where $\card A$ is the cardinality of the set $A$. Given $x\in X$, one defines (when the limit $n\to\infty$ exists) the quantities 
\begin{equation}\label{cordim}
 \underline{\alpha}_q(x)=  \frac{1}{q-1}\lim_{\overline{ \ve\to 0}}   \lim_{n\to \infty}\frac{\log C_q(x,n,\ve)}{\log(\ve)}, ~~  ~~  \overline{\alpha}_q(x)= \frac{1}{q-1}\overline{\lim_{\ve\to 0}} \lim_{n\to \infty}\frac{\log C_q(x,n,\ve)}{\log(\ve)},
\end{equation}
the so-called \emph{lower} and \emph{upper correlation dimensions of order $q$ at the  point $x$} or the \emph{lower} and the \emph{upper $q$-correlation dimensions at $x$}. If the limit $\ve\to 0$ exists, we denote it by $\alpha_q$, the so-called \emph{$q$-correlation dimension at $x$}. In this case, if $n$ is large and $\ve$ is small, one has the asymptotic relation
\[C_q(x,n,\ve) \thicksim\ve^{\alpha_q}.\]
 
$C_q(x,n,\ve)$ gives an account of how the orbit of $x$, truncated at time $n$, ``folds'' into an $\ve$-neighborhood of itself; the larger $C_q(x,n,\ve)$, the ``tighter'' this truncated orbit is. $\underline{\alpha}_q(x)$ and $ \overline{\alpha}_q(x)$ are, respectively, the lower and upper growing rates of $C_q(x,n,\ve)$ as $n\to\infty$ and $\ve\to 0$ (in this order).

\begin{defi}[Energy function] 
  Let $X$ be a general metric space and let $\mu$ be a Borel probability measure on $X$. For $q\in \R\setminus\{1\}$ and  $\ve\in(0,1)$, one defines the so-called \emph{energy function} of $\mu$ by the law
\begin{eqnarray}
\label{ef}
I_{\mu}(q,\ve)=\int_{\supp(\mu)}\mu(B(x,\ve))^{q-1}d\mu(x),
\end{eqnarray}
where $\supp(\mu)$ is the topological support of $\mu$.
\end{defi}
The next result shows that the two previous definitions are intimately related.

\begin{teo}[Pesin \cite{Pesin1993,Pesin1997}]
\label{Pesincorrelation}
Let $X$ be a Polish metric space, assume that $\mu$ is ergodic and let $q\in\N\setminus\{1\}$. Then, there exists a set $Z\subset X$ of full $\mu$-measure such that, for each $R,\eta>0$ and each $x\in Z$, there exists an $N=N(x,\eta,R)\in\N$ such that
\[|C_q(x,n,\varepsilon)-I_{\mu}(q,\varepsilon)|\leq \eta\]
holds for each $n\geq N$ and each $0<\varepsilon\leq R$. In other words, $C_q(x,n,\varepsilon)$ tends to $I_{\mu}(q,\varepsilon)$ when $n\to \infty$ for $\mu$-almost every $x\in X$, uniformly over $\varepsilon\in(0,R]$.
\end{teo}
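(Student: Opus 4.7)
My plan is to view $C_q(x,n,\ve)$ as the integral of a geometric indicator on $X^q$ against the $q$-fold product of the empirical measure along the orbit of $x$, and then transfer the weak convergence of the empirical measure (which follows from ergodicity of $\mu$) to its $q$-fold product. Set $\mu_n^x:=\frac{1}{n+1}\sum_{i=0}^{n}\delta_{f^i(x)}$ and let $F_\ve:X^q\to\{0,1\}$ be the indicator of the closed set $A_\ve=\{(y_1,\ldots,y_q):r(y_1,y_j)\leq\ve\text{ for }j=2,\ldots,q\}$ (which matches the definition of $I_\mu$ in~\eqref{ef} and is equivalent, up to a factor of $2$ in $\ve$ via the triangle inequality, to the pairwise condition appearing in $C_q$). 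Then
\[
C_q(x,n,\ve)\;=\;\int_{X^q}F_\ve\,d(\mu_n^x)^{\otimes q}\qquad\text{and}\qquad I_\mu(q,\ve)\;=\;\int_{X^q}F_\ve\,d\mu^{\otimes q}.
\]

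Since $X$ is Polish, the space of bounded continuous functions on $X$ contains a countable family determining weak convergence of Borel probabilities. Applying Birkhoff's ergodic theorem to each member of that family and intersecting the resulting full-measure sets, I obtain a set $Z\subset X$ with $\mu(Z)=1$ along which $\mu_n^x\Rightarrow\mu$ weakly. By the standard fact that products of weakly convergent probabilities are weakly convergent (Billingsley), $(\mu_n^x)^{\otimes q}\Rightarrow\mu^{\otimes q}$ on $X^q$ for every $x\in Z$. The portmanteau theorem then yields $C_q(x,n,\ve)\to I_\mu(q,\ve)$ as $n\to\infty$ for every $\ve$ satisfying $\mu^{\otimes q}(\partial A_\ve)=0$. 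Since $\ve\mapsto\mu^{\otimes q}(A_\ve)$ is non-decreasing and bounded by $1$, its discontinuity set $\mathcal{D}\subset(0,R]$ is at most countable, so the above convergence is available for every $\ve\in(0,R]\setminus\mathcal{D}$ and every $x\in Z$.

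To upgrade this to the uniform statement on $(0,R]$, I would exploit the fact that both $\ve\mapsto C_q(x,n,\ve)$ and $\ve\mapsto I_\mu(q,\ve)$ are non-decreasing and bounded by $1$. Given $\eta>0$, only finitely many jumps of $I_\mu(q,\cdot)$ exceed $\eta/4$; I would straddle each such jump by a pair of continuity points from $(0,R]\setminus\mathcal{D}$ lying within $\eta/4$ of the corresponding one-sided limits, and then refine the partition by adding finitely many further continuity points so that the oscillation of $I_\mu(q,\cdot)$ on each remaining sub-interval is less than $\eta/4$. Applying the previous step to this finite grid yields some $N=N(x,\eta,R)$ such that $|C_q(x,n,\ve_k)-I_\mu(q,\ve_k)|<\eta/4$ at every grid point for $n\geq N$; the joint monotonicity of prelimit and limit then sandwiches $|C_q(x,n,\ve)-I_\mu(q,\ve)|<\eta$ uniformly in $\ve\in(0,R]$.

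The main obstacle is this last step. Because $I_\mu(q,\cdot)$ need not be continuous, pointwise convergence at a dense set cannot be promoted to uniform convergence by any Dini-type argument on its own; one has to lean on the monotonicity of both sides simultaneously and be careful not to place a grid point on a discontinuity or to miss a large jump. Everything preceding that is a fairly standard combination of ergodicity with the weak-convergence stability of products, and does not depend on any finer dynamical structure.
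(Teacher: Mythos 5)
The paper itself never proves Theorem~\ref{Pesincorrelation} (it is quoted from Pesin), so your proposal has to stand on its own. Its core machinery does: writing the correlation sum as an integral against the $q$-fold product of the empirical measure, obtaining $\mu_n^x\Rightarrow\mu$ for $\mu$-a.e.\ $x$ from Birkhoff's theorem applied to a countable convergence-determining class, passing to product measures, and invoking portmanteau at every scale $\ve$ with $\mu^{\otimes q}(\partial A_\ve)=0$ (a co-countable set of scales, by monotonicity) is the standard and correct route, and it proves: for a.e.\ $x$, $C_q(x,n,\ve)\to I_\mu(q,\ve)$ for all but countably many $\ve\in(0,R]$.

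There are, however, two genuine gaps between that and the statement as written. First, your identity $C_q(x,n,\ve)=\int F_\ve\,d(\mu_n^x)^{\otimes q}$ is false for $q\ge 3$: the paper's $C_q$ imposes the \emph{pairwise} condition $r(f^{i_j}x,f^{i_l}x)\le\ve$ for all $j,l$, whereas $F_\ve$ and $I_\mu$ encode the \emph{star} condition $r(y_1,y_j)\le\ve$. These are different subsets of $X^q$ with different $\mu^{\otimes q}$-measures, and your ``factor of $2$ in $\ve$'' remark only sandwiches $\lim_n C_q(x,n,\ve)$ between $I_\mu(q,\ve/2)$ and $I_\mu(q,\ve)$, which is not the asserted equality. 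Concretely, for the $3$-cycle on $\{a,b,c\}$ with $r(a,b)=r(a,c)=1$, $r(b,c)=3/2$ and $\mu$ uniform, every $\ve\in(1,3/2)$ is a continuity scale, yet $\lim_n C_3(x,n,\ve)$ equals the $\mu^{\otimes 3}$-measure of the pairwise set, namely $15/27$, while $I_\mu(3,\ve)=17/27$. (For $q=2$ the two conditions agree and your identity is exact up to the harmless $n^q$ versus $(n+1)^q$ normalization.)

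Second --- and this is precisely the obstacle you flag at the end --- the upgrade to uniformity over \emph{all} $\ve\in(0,R]$ cannot be completed. Between grid points $\ve_k<\ve<\ve_{k+1}$ the joint monotonicity gives only $C_q(x,n,\ve)-I_\mu(q,\ve)\le \eta/4+\bigl(I_\mu(q,\ve_{k+1})-I_\mu(q,\ve_k)\bigr)$, so across a jump of size $J>\eta$ (which any grid of continuity points must straddle) the bound degenerates to order $J$ no matter how the grid is refined. This is not a bookkeeping defect: pointwise convergence at a jump scale genuinely fails. Take $X=\{a,b\}$ with $r(a,b)=1$, $f$ the swap, $\mu$ uniform (ergodic), $q=2$: then $C_2(x,n,1)\to 1$, while $I_\mu(2,1)=\int\mu(B(y,1))\,d\mu(y)=1/2$ with open balls, so no $N$ works for $\eta<1/2$, $R=1$; changing the open/strict conventions merely relocates the failure to other ergodic examples in which $\mu^{\otimes 2}$ charges pairs at distance exactly $\ve$ that the orbit approaches only from one side. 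What your method actually proves is the correct version of the result --- convergence at all continuity points of $\ve\mapsto I_\mu(q,\ve)$, or, equivalently strengthened, Pesin-type scale-blurred bounds $I_\mu\bigl(q,\ve/(1+\delta)\bigr)-\delta\le C_q(x,n,\ve)\le I_\mu\bigl(q,(1+\delta)\ve\bigr)+\delta$ for $n$ large --- and your write-up should claim one of those instead of the literal uniform statement, which is not provable as phrased.
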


Taking into account Theorem~\ref{Pesincorrelation}, it is natural to introduce the following dimensions.

\begin{defi}[Generalized fractal dimensions]\label{gfd}
 Let $X$ be a general metric space, let $\mu$ be a Borel probability measure on $X$, and let $q\in \mathbb{R}\setminus\{1\}$. The so-called upper and lower $q$-generalized fractal dimensions of $\mu$ are defined, respectively, as
$$
 D^+_{\mu}(q)=\limsup_{\varepsilon\downarrow 0} \frac{\log I_{\mu}(q,\varepsilon)}{(q-1)\log \varepsilon}  ~~\mbox{ and  }~~ D^-_{\mu}(q)=\liminf_{\varepsilon\downarrow 0} \frac{\log I_{\mu}(q,\varepsilon)}{(q-1)\log \varepsilon}.
$$
If the limit $\ve\to 0$ exists, we denote it by $D_{\mu}(q)$, the so-called \emph{$q$-generalized fractal dimension} (also known as $q$-Hentchel-Procaccia dimension). For $q=1$, one defines the so-called upper and lower entropy dimensions (see~\cite{Barbaroux} for a discussion about the connection between entropy dimensions and Rényi information dimensions), respectively, as
\[D^+_{\mu}(1)=\limsup_{\varepsilon\downarrow 0} \frac{\int_{\supp(\mu)} \log \mu(B(x,\ve))d\mu(x)}{\log \varepsilon},\]
\[ D^-_{\mu}(1)=\liminf_{\varepsilon\downarrow 0} \frac{\int_{\supp(\mu)} \log \mu(B(x,\ve))d\mu(x)}{\log \varepsilon}.\]
\end{defi}

\begin{defi}[lower and upper packing and Hausdorff dimensions of a measure \cite{Mattila}]\label{HPdim}
  Let $\mu$ be a positive Borel measure on $(X,\mathcal{B})$. The lower and upper packing and Hausdorff dimensions of $\mu$ are defined, respectively, as 
\begin{eqnarray*}
\dim_{K}^-(\mu)&=&\inf\{\dim_{K}(E)\mid \mu( E)>0, ~ E\in \mathcal{B}\},\\
\dim_{K}^+(\mu)&=&\inf\{\dim_{K}(E)\mid \mu(X\setminus E)=0, ~ E\in \mathcal{B}\},
\end{eqnarray*}
where $K$ stands for $H$ (Hausdorff) or $P$ (packing); here, $\dim_{H(P)}(E)$ represents the Hausdorff (packing) dimension of the Borel set $E$ (see~\cite{Mattila} for details).
\end{defi}

\begin{defi}[lower and upper local dimensions of a measure]\label{Locdim}
Let $\mu$ be a positive finite Borel measure on $X$. One defines the upper and lower local dimensions of $\mu$ at $x\in X$ as
$$\overline{d}_{\mu}(x)=\limsup_{\ve\to 0}\frac{\log \mu(B(x,\ve))}{\log \ve} ~~\mbox{ and }~~ \underline{d}_{\mu}(x)=\liminf_{\ve\to 0}\frac{\log \mu(B(x,\ve))}{\log \ve},$$ 
if, for every $\ve> 0$, $\mu(B(x;\ve))>0$; if not, $\overline{\underline{d}}_\mu(x):=+\infty$.
\end{defi}

Some useful relations involving the generalized, Hausdorff and packing dimensions of a pro-bability measure are given by the following inequalities, which combine Propositions~4.1 and~4.2 in~\cite{Barbaroux} with Proposition~1.1 in \cite{AS} (although Propositions~4.1 and~4.2 in~\cite{Barbaroux} were originally proved for probability measures defined on $\R$, one can extend them to probability measures defined on a general metric space $X$; see also \cite{Rudnicki}).

\begin{propo}[\cite{Barbaroux,Rudnicki}]
\label{BGT1} 
Let $\mu$ be a Borel probability measure over $X$, let $q>1$ and let $0<s<1$. Then,
\begin{eqnarray}\label{BGTeq}
  D^-_{\mu}(q)\leq \mu\textrm{-}\essinf \underline{d}_{\mu}(x)=\dim_H^-(\mu)\leq \mu\textrm{-}\esssup \underline{d}_{\mu}(x)=  \dim_H^+(\mu)\leq D^-_{\mu}(s),
\end{eqnarray} and
\begin{eqnarray}\label{BGTeq1}
D^+_{\mu}(q)\leq \mu\textrm{-}\essinf \overline{d}_{\mu}(x)=\dim_P^-(\mu)\leq  \mu\textrm{-}\esssup \overline{d}_{\mu}(x)= \dim_P^+(\mu)\leq D^+_{\mu}(s). 
\end{eqnarray}

Moreover, $D_{\mu}^{\pm}(q)\leq D_{\mu}^{\pm}(1)\leq D_{\mu}^{\pm}(s)$.
\end{propo}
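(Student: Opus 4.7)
The plan is to decompose each chain into three independent pieces. First, the monotonicity $D_\mu^\pm(q)\le D_\mu^\pm(1)\le D_\mu^\pm(s)$ for $q>1>s>0$. Second, the central equalities that identify $\essinf \underline{d}_\mu$, $\esssup \underline{d}_\mu$, $\essinf \overline{d}_\mu$ and $\esssup \overline{d}_\mu$ with $\dim_H^\pm(\mu)$ and $\dim_P^\pm(\mu)$. Third, the four outer sandwich bounds.

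The monotonicity is the simplest piece: Hölder's inequality shows that $q\mapsto \log I_\mu(q,\ve)$ is convex in $q$ and vanishes at $q=1$, so the chord slope $\log I_\mu(q,\ve)/(q-1)$ is non-decreasing in $q$; dividing by $\log\ve<0$ reverses the direction, and the inequality survives passing to $\limsup$ and $\liminf$. The case $q\to 1$ is handled by a first-order expansion of $I_\mu(q,\ve)$ around $q=1$, whose derivative is the integral defining $D_\mu^\pm(1)$. The central equalities are classical Billingsley--Cutler--Tamashiro results, proved via the mass distribution principle in one direction and the $5r$-covering lemma in the other; both arguments extend from $\R$ to a separable metric space (see~\cite{Mattila}).

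The main obstacle lies in the four outer sandwich bounds, which split into two types according to whether the controlling condition on the local dimension holds for all small $\ve$ or only along subsequences. For $D_\mu^+(q)\le \essinf\overline{d}_\mu$ with $q>1$ and its dual $\esssup\underline{d}_\mu\le D_\mu^-(s)$ with $s\in(0,1)$, the defining condition is \emph{uniform} in $\ve$ for each $x$; an Egorov-type extraction upgrades this to uniformity on a positive-measure subset $E'$, on which $I_\mu(q,\ve)\ge \mu(E')\,\ve^{(q-1)\alpha}$ (respectively $I_\mu(s,\ve)\ge \mu(E')\,\ve^{(s-1)\alpha}$), and dividing by $(q-1)\log\ve$ (resp.\ $(s-1)\log\ve$) closes the bound after letting $\alpha$ tend to the essential extremum. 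For the other pair, $D_\mu^-(q)\le \essinf\underline{d}_\mu$ and $\esssup\overline{d}_\mu\le D_\mu^+(s)$, the condition only holds along subsequences of $\ve$, and I would use a Borel--Cantelli argument: with $\alpha>\essinf\underline{d}_\mu$ and $E=\{x:\underline{d}_\mu(x)<\alpha\}$, the sets $A_j=\{x\in E:\mu(B(x,2^{-j}))\ge 2^{-(j+1)\alpha}\}$ satisfy $E\subseteq \limsup_j A_j$, forcing $\sum_j \mu(A_j)=\infty$ and hence $\mu(A_{j_k})\ge j_k^{-2}$ along some subsequence. Then
\begin{equation*}
I_\mu(q,2^{-j_k}) \;\ge\; \int_{A_{j_k}} \mu(B(x,2^{-j_k}))^{q-1}\,d\mu \;\ge\; \mu(A_{j_k})\,2^{-(j_k+1)\alpha(q-1)},
\end{equation*}
and after division by $(q-1)\log 2^{-j_k}<0$ the $\log j_k$ penalty is absorbed because $\log j_k=o(j_k)$, producing $D_\mu^-(q)\le\alpha$. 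The principal difficulty is precisely checking that this Borel--Cantelli extraction yields only subexponential decay in $\mu(A_{j_k})$ and that the chosen sequence $j_k\to\infty$ is compatible with the $\liminf$ structure of $D_\mu^-$; the remaining ingredients (measurability of $x\mapsto\mu(B(x,\ve))$, Egorov, and the $5r$-covering in a separable metric space) are routine.
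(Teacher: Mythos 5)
Your proposal is correct, but it necessarily takes a different route from the paper, because the paper does not prove Proposition~\ref{BGT1} at all: it is quoted as a combination of Propositions~4.1 and~4.2 in~\cite{Barbaroux} with Proposition~1.1 in~\cite{AS}, together with the (unproved) remark that the arguments written there for measures on $\R$ extend to general metric spaces. What you have done is essentially supply that extension, and your decomposition is sound. For the monotonicity piece, note that Jensen's inequality alone already gives $\frac{\log I_\mu(s,\ve)}{s-1}\le\int\log\mu(B(x,\ve))\,d\mu(x)\le\frac{\log I_\mu(q,\ve)}{q-1}$, which is cleaner than a first-order expansion and sidesteps any discussion of whether the entropy integral is finite; dividing by $\log\ve<0$ and passing to $\limsup$/$\liminf$ then yields $D_\mu^\pm(q)\le D_\mu^\pm(1)\le D_\mu^\pm(s)$. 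Citing the four identifications with $\dim_H^\pm(\mu)$ and $\dim_P^\pm(\mu)$ is legitimate (they are classical, see~\cite{Cutler1995,Mattila}), and your remark that the covering direction uses the $5r$-lemma rather than Besicovitch is exactly why those citations survive the passage from $\R$ to a general metric space. The real content is your treatment of the four outer bounds, and the quantifier bookkeeping is right: the pair $D_\mu^+(q)\le\mu\textrm{-}\essinf\,\overline{d}_{\mu}$ and $\mu\textrm{-}\esssup\,\underline{d}_{\mu}\le D_\mu^-(s)$ rests on a ``for all small $r$'' condition, so exhaustion by the nested sets $E_m$ (this is a countable union argument, not really Egorov's theorem, a cosmetic point) gives a single positive-measure set on which the pointwise bound is uniform; the pair $D_\mu^-(q)\le\mu\textrm{-}\essinf\,\underline{d}_{\mu}$ and $\mu\textrm{-}\esssup\,\overline{d}_{\mu}\le D_\mu^+(s)$ only provides control along subsequences, which suffices precisely because an upper bound on a $\liminf$ (or a lower bound on a $\limsup$) can be certified along a single sequence $\ve_k\to0$. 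I checked your Borel--Cantelli computation: any $r\in(2^{-(j+1)},2^{-j}]$ with $\mu(B(x,r))>r^\alpha$ forces $\mu(B(x,2^{-j}))\ge 2^{-(j+1)\alpha}$, so $E\subset\limsup_j A_j$; positivity of $\mu(\limsup_j A_j)$ does force $\sum_j\mu(A_j)=\infty$, hence $\mu(A_{j_k})\ge j_k^{-2}$ infinitely often; and the error term $2\log j_k/\bigl((q-1)j_k\log 2\bigr)$ vanishes, giving $D_\mu^-(q)\le\alpha$ for every $\alpha>\mu\textrm{-}\essinf\,\underline{d}_{\mu}$. The dual bound works by the same scheme because $t\mapsto t^{s-1}$ is decreasing, so the upper bound on $\mu(B(x,2^{-j_k}))$ on $A_{j_k}$ becomes a lower bound on the integrand. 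The only ingredient you wave at that deserves one line is the Borel measurability of the sets $A_j$ and $E_m$, which follows from lower semicontinuity of $x\mapsto\mu(B(x,\ve))$ (Fatou). In short: the paper's citation buys brevity but leaves the metric-space extension implicit; your argument buys a self-contained proof, valid in an arbitrary metric space, whose only external inputs are the classical local-dimension characterizations of $\dim_H^\pm$ and $\dim_P^\pm$.
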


Our first result gives, under some assumptions, 
upper and lower bounds for the upper and lower generalized fractal dimensions of a probability measure defined on a compact metric space.

\begin{teo}
\label{torigual0}
Let $X$ be a compact metric space, let $\mu$ be a probability Borel measure on $X$ and suppose that there exist constants $0<\alpha<\beta<+\infty$ such that, for each $x\in \supp(\mu)$, $\alpha\le\underline{d}_{\mu}(x)\le \overline{d}_{\mu}(x)\le\beta$. Then, for each $s<1$ and each $q>1$, one has
\[\alpha\leq D^-_{\mu}(q)\leq D_\mu^-(1)\le D_\mu^+(1)\leq D^+_{\mu}(s)\le\beta.\]  
\end{teo}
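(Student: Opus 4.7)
The chain $\alpha \le D^-_\mu(q) \le D^-_\mu(1) \le D^+_\mu(1) \le D^+_\mu(s) \le \beta$ splits into three easy middle inequalities and two outer bounds that carry the content of the theorem. The middle inequalities $D^-_\mu(q) \le D^-_\mu(1)$ and $D^+_\mu(1) \le D^+_\mu(s)$ are exactly the concluding assertion of Proposition~\ref{BGT1}, while $D^-_\mu(1) \le D^+_\mu(1)$ is immediate from the definitions as a $\liminf$ and a $\limsup$. Thus the whole burden of the proof lies in the outer inequalities $\alpha \le D^-_\mu(q)$ and $D^+_\mu(s) \le \beta$, neither of which follows from Proposition~\ref{BGT1} alone.

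For the lower bound $\alpha \le D^-_\mu(q)$ with $q > 1$, my plan is to upgrade the pointwise hypothesis $\underline{d}_\mu(x) \ge \alpha$ on $\supp(\mu)$ to an Egorov-type uniform estimate. Fix $\delta > 0$ and consider the Borel sets
\[
A_k^\delta = \{x \in \supp(\mu) : \mu(B(x,\varepsilon)) \le \varepsilon^{\alpha-\delta}\ \text{for every}\ \varepsilon \in (0, 1/k]\}.
\]
The pointwise lower bound guarantees $\bigcup_k A_k^\delta = \supp(\mu)$, so $\mu(A_k^\delta) \nearrow 1$. Given $\eta > 0$, I pick $k$ with $\mu(\supp(\mu) \setminus A_k^\delta) < \eta$ and, for $\varepsilon \le 1/k$, split
\[
I_\mu(q,\varepsilon) = \int_{A_k^\delta} \mu(B(x,\varepsilon))^{q-1}\, d\mu(x) + \int_{\supp(\mu) \setminus A_k^\delta} \mu(B(x,\varepsilon))^{q-1}\, d\mu(x);
\]
the first integral is at most $\varepsilon^{(q-1)(\alpha-\delta)}$, which is exactly the target asymptotic. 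Once the second integral is controlled, taking logarithms, dividing by $(q-1)\log\varepsilon < 0$, and sending $\eta\to 0$ and then $\delta\to 0$ should give $D^-_\mu(q) \ge \alpha$.

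The argument for $D^+_\mu(s) \le \beta$ with $s < 1$ is symmetric. The pointwise upper bound $\overline{d}_\mu(x) \le \beta$ yields Borel sets
\[
B_k^\delta := \{x \in \supp(\mu) : \mu(B(x,\varepsilon)) \ge \varepsilon^{\beta+\delta}\ \text{for every}\ \varepsilon \in (0, 1/k]\}
\]
with $\mu(B_k^\delta) \nearrow 1$. Since $s-1<0$, on $B_k^\delta$ one has $\mu(B(x,\varepsilon))^{s-1} \le \varepsilon^{(s-1)(\beta+\delta)}$, so the analogous decomposition of $I_\mu(s,\varepsilon)$ produces an upper bound of the right order; after logarithms and limits this yields $D^+_\mu(s) \le \beta$.

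The most delicate step, and what I expect to be the main obstacle, is the contribution of the exceptional set $\supp(\mu) \setminus A_k^\delta$ (and its counterpart in the $s<1$ argument). The naive bound combining $\mu(B(x,\varepsilon))^{q-1} \le 1$ with $\mu(\supp(\mu) \setminus A_k^\delta) < \eta$ gives only $I_\mu(q,\varepsilon) \le \varepsilon^{(q-1)(\alpha-\delta)} + \eta$; as $\varepsilon \to 0$ with $\eta$ fixed this forces $\log I_\mu(q,\varepsilon)/((q-1)\log\varepsilon) \to 0$ rather than $\alpha-\delta$. To close this gap one has to couple $\eta$ (equivalently $k$) to $\varepsilon$, choosing $k = k(\varepsilon) \to \infty$ as $\varepsilon \to 0$ slowly enough that the residual term remains $O(\varepsilon^{(q-1)(\alpha-\delta)})$. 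I anticipate this requires invoking the companion bound $\overline{d}_\mu \le \beta$ on $\supp(\mu)$ to prevent mass from concentrating on the exceptional set, combined with the compactness of $\supp(\mu)$ to extract the quantitative rate at which $\mu(\supp(\mu) \setminus A_k^\delta) \to 0$ that the plain Egorov decomposition does not supply.
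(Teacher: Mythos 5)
Your reduction of the three middle inequalities to Proposition~\ref{BGT1} and to the definitions is correct, but the proposal does not prove either outer bound: in both cases the decisive step --- controlling the integral over the exceptional set $\supp(\mu)\setminus A_k^\delta$ --- is exactly the step you leave open (``I anticipate this requires\dots''), so what you have is a plan with a hole, not a proof. Worse, the hole cannot be filled by the means you suggest. An Egorov-type exhaustion gives no rate for $\mu(\supp(\mu)\setminus A_k^\delta)$, and the hypotheses cannot supply one, because they are purely pointwise-asymptotic: the threshold below which $\mu(B(x,\ve))\le\ve^{\alpha-\delta}$ holds may degenerate as $x$ ranges over $\supp(\mu)$, even with the companion bound $\overline{d}_\mu\le\beta$ and compactness. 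Concretely, let $\mu$ be the measure on $[0,1]$ which is uniform on each interval $I_n=[2^{-n},\,2^{-n}+2^{-8n}]$ with $\mu(I_n)=3\cdot 2^{-2n}$, $n\ge 1$. Every point of every $I_n$ has lower and upper local dimension $1$, and the accumulation point $0$ has local dimension $2$, so the hypotheses of Theorem~\ref{torigual0} hold with $\alpha=1$, $\beta=2$; yet at scale $\ve_n=2^{-8n}$ every $x\in I_n$ satisfies $\mu(B(x,\ve_n))\ge\mu(I_n)\approx \ve_n^{1/4}$, whence $I_\mu(2,\ve_n)\ge \mu(I_n)^2\gtrsim \ve_n^{1/2}$ and therefore $D^-_\mu(2)\le 1/2<\alpha$. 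In other words, at the scales that matter the exceptional set (here $I_n$ itself, of mass $\approx\ve_n^{1/4}$) carries enough mass to dominate $I_\mu(q,\ve_n)$: no coupling $k=k(\ve)$ can rescue your splitting, because the inequality you are trying to establish genuinely fails for such a measure.

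For comparison, the paper's own proof takes a different route, with no exceptional set at all: it introduces the uniform local dimensions $\underline{d}_{\mu,i}(x)=\liminf_{\ve\to 0}\inf_{y\in B(x,\ve)\cap\supp(\mu)}\log\mu(B(y,\ve))/\log\ve$, checks that they coincide with the ordinary ones (via $B(y,\ve)\subset B(x,2\ve)$ when $d(x,y)<\ve$), so that each $x$ has a threshold $\ve(x)$ with $\mu(B(y,\ve))\le\ve^{\alpha-\eta}$ whenever $\ve<\ve(x)$ \emph{and} $d(x,y)<\ve$, and then uses a finite subcover $\{B(x_i,\ve(x_i))\}$ of the compact set $\supp(\mu)$ to conclude that $\mu(B(y,\ve))\le\ve^{\alpha-\eta}$ for every $y\in\supp(\mu)$ and every $\ve$ below $\min_i\ve(x_i)$, which yields $I_\mu(q,\ve)\le\ve^{(q-1)(\alpha-\eta)}$ with no remainder term. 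Observe, however, that the local estimate obtained there is diagonal (it requires $d(x_i,y)<\ve$, not merely $d(x_i,y)<\ve(x_i)$), while the covering step applies it off-diagonally to all $y\in B(x_i,\ve(x_i))$ and all small $\ve$; the example above is precisely a configuration in which that passage breaks down, since the intervals $I_n$ lie in any fixed neighborhood of $0$ but at distance $2^{-n}\gg\ve_n$ from its center. So your instinct that the exceptional set is ``the main obstacle'' is sound; what is missing in your proposal is the recognition that this obstacle is not a removable technicality of your method but a genuine failure mode permitted by the pointwise hypotheses --- one which, in my reading, also undermines the compactness step of the paper's own argument as written.
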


The following result also presents upper and lower bounds for the upper and lower generalized fractal dimensions of an invariant measure for which Brin-Katok's Theorem is satisfied punctually.
\begin{teo}
\label{BKpunctually}
Let $(X,f)$ be a topological dynamical system such that $X$ is a compact metric space, and let $\mu$ be an invariant measure. Suppose that Brin-Katok's Theorem is satisfied  punctually, and suppose also that $f$ is a continuous function for which there exist  constants $\Lambda,\lambda>1$ and $\delta>0$  such that, for each $x,y\in X$ so that $d(x,y)<\delta$,
\begin{equation}\label{BLIP}
  \lambda\,d(x,y)\leq d(f(x),f(y))\leq \Lambda\,d(x,y).
\end{equation}
Then, for each $s<1$ and each $q>1$, one has
\[\frac{h_{\mu}(f)}{\log \Lambda}\leq D^-_{\mu}(q)\leq D_\mu^-(1)\le D_\mu^+(1)\leq D^+_{\mu}(s)\leq \frac{h_{\mu}(f)}{\log \lambda}.\] 
\end{teo}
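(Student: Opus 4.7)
The strategy is to reduce the theorem to Theorem~\ref{torigual0} by establishing, for every $x\in\supp(\mu)$, the pointwise estimate
\[\frac{h_\mu(f)}{\log \Lambda} \le \underline{d}_\mu(x) \le \overline{d}_\mu(x) \le \frac{h_\mu(f)}{\log \lambda}.\]
Once this is in hand, Theorem~\ref{torigual0} applied with $\alpha = h_\mu(f)/\log\Lambda$ and $\beta = h_\mu(f)/\log\lambda$ delivers the two outer inequalities of the conclusion, while the inner chain $D_\mu^-(q)\le D_\mu^-(1)\le D_\mu^+(1)\le D_\mu^+(s)$ is already contained in Proposition~\ref{BGT1}.

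The first step is to convert the bi-Lipschitz hypothesis~(\ref{BLIP}) into inclusions relating the dynamical Bowen ball $B_n(x,\delta)=\{y\in X:d(f^i(x),f^i(y))<\delta,\ 0\le i\le n-1\}$ to ordinary metric balls. Iterating the upper estimate $d(f(x),f(y))\le \Lambda\,d(x,y)$ — legitimate at each step because the intermediate distances stay below $\delta$ — gives $B(x,\delta\Lambda^{-(n-1)}) \subset B_n(x,\delta)$; iterating the lower estimate $\lambda\,d(x,y)\le d(f(x),f(y))$ for $y\in B_n(x,\delta)$ (where all iterates are automatically within $\delta$ by the very definition of the Bowen ball) gives the reverse inclusion $B_n(x,\delta) \subset B(x,\delta\lambda^{-(n-1)})$.

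Taking $-\log\mu$ of these inclusions, dividing by $n$, and invoking the elementary observation that along a geometric sequence $r_n=\delta c^{-(n-1)}$ with $c>1$ one has $\liminf_n \frac{-\log\mu(B(x,r_n))}{-\log r_n}=\underline{d}_\mu(x)$ and $\limsup_n \frac{-\log\mu(B(x,r_n))}{-\log r_n}=\overline{d}_\mu(x)$, one arrives at
\[\log \Lambda \cdot \underline{d}_\mu(x) \ge \liminf_{n\to\infty} \frac{-\log \mu(B_n(x,\delta))}{n},\qquad \log \lambda \cdot \overline{d}_\mu(x) \le \limsup_{n\to\infty} \frac{-\log \mu(B_n(x,\delta))}{n}.\]
Since the left-hand sides do not depend on $\delta$, letting $\delta\to 0$ and applying the punctual Brin-Katok Theorem (whose liminf and limsup versions both produce $h_\mu(f)$) yields the sought pointwise bounds. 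The main technical point I expect is the careful bookkeeping in the iterative derivation of the Bowen ball inclusions together with the passage from the geometric subsequence to the full $\limsup/\liminf$ defining $\overline{d}_\mu$ and $\underline{d}_\mu$; once these are settled, the conclusion follows by directly invoking Theorem~\ref{torigual0}.
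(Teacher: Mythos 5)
Your proposal is correct and follows essentially the same route as the paper: the paper isolates your pointwise estimates as Lemma~\ref{infsup}, proved via exactly the same Bowen-ball versus metric-ball inclusions ($B(x,\ve\Lambda^{-n})\subset B(x,n,\ve)\subset B(x,\ve\lambda^{-n})$) and the same geometric-sequence computation of the local dimensions, then invokes the punctual Brin--Katok hypothesis to replace the local entropies by $h_\mu(f)$ and concludes with Theorem~\ref{torigual0} (the inner chain coming from Proposition~\ref{BGT1}). The only differences are organizational (the paper's lemma also records consequences for Hausdorff/packing dimensions of ergodic measures) and notational (your Bowen ball uses indices $0\le i\le n-1$ and reuses $\delta$ both as the hypothesis constant and as the shrinking radius), neither of which affects correctness.
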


Invariant measures that satisfy the hypotheses of Theorem~\ref{BKpunctually} are, for example:
\begin{enumerate}
\item Any homogeneous measure of a topological dynamical system $(X,f)$ (if it exists) that satisfies~\eqref{BLIP} (see Definition~\ref{MU} and Remark~\ref{bk}).
\item The Gibbs measures of maximal entropy of subshifts, $\sigma$, of finite type (see~\cite{BarreiraSchmeling} for the details and related situations); it is possible to endow the space with a metric such that $\sigma$ satisfies the second inequality in~\eqref{BLIP}. Moreover, if $\sigma$ is a topologically mixing subshift with the specification property, then any $\sigma$-invariant measure of maximal entropy is a Gibbs measure.
\item The maximal entropy measures of expansive homeomorphisms with specification (they are related to the equilibrium measures of the cohomology class of $\varphi\equiv 0$; see \cite{TakensVerbitsky}); moreover, due to Theorem~\ref{expanshyperbolic}, it follows that $f$ satisfies the second inequality in~\eqref{BLIP}, where $d$ is the hyperbolic metric given by Theorem~\ref{expanshyperbolic}. This is particularly true for Axiom A systems (see Subsection~\ref{EXPHOMEO} for details).
 \item The invariant measure of maximal entropy and maximal dimension of an expanding map supported on a conformal repeller $J$. Namely, let us assume that $f$ is topologically mixing, and let $m$ be the unique equilibrium measure corresponding to the H\"older continuous function $-s\log|a(x)|$ on $M$, where $s$ is the unique root of Bowen's equation $P_J(-s\log|a|)=0$ (see Appendix II in~\cite{Pesin1997}). $m$ is the measure of maximal dimension (that is, $\dim_Hm=\dim_HJ=s$), and it satisfies the conditions of Theorem~\ref{BKpunctually}; moreover, it is already known that, for each $q\in\R$, $D_m^\pm(q)=s$; see Remark 1 on page 219 in~\cite{Pesin1997}.
\end{enumerate}

\subsection{ $f$-homogeneous measures}

\begin{defi}
\label{MU}
Let $f$ be a continuous transformation of a compact metric space $(X,d)$. A Borel probability measure $\mu$ on $X$ is said to be $f$-homogeneous if for each $\ve>0$, there exist $\delta>0$ and $c>0$ such that, for each $n\in \N$ and each $x,y \in X$,  
\begin{equation}\label{CMU}
  \mu(B(y,n,\delta))\leq c\,\mu(B(x,n,\ve)),
\end{equation}
where $B(x,n,\ve):=\{y\in X\mid d(f^i(x),f^j(y))<\ve,\,\forall i=0,\ldots,n\}$ is the Bowen ball of size $n$ and radius $\ve$, centered at $x$.
\end{defi}

The homogeneous measures are defined of general manner in \cite{Bowen1973}; here, we only consider the case of compact metric spaces. It follows directly from Definition~\ref{MU} that if $\mu$ is a non-trivial $f$-homogeneous measure, then $\supp(\mu)=X$ (namely, suppose that there exist $x\in X$ and $\ve>0$ such that $\mu(B(x,\ve))=0$; then, for each $n\in\mathbb{N}$, $\mu(B(x,n,\ve))=0$. It follows now from this condition and~\eqref{CMU} that, for each $y\in X$, $\mu(B(y,1,\delta))=0$, and finally, from the continuity of $f$, that for each $y\in X$, there exists a $\delta(y)>0$ such that $\mu(B(y,\delta(y)))=0$. Since $X$ is compact, one concludes that $\mu\equiv 0$). 

Simple examples of homogeneous measures are the Lebesgue measure, invariant under the Arnold-Thom cat map, and the Bowen-Margulis measure for Axiom A diffeomorphisms (see 
Proposition 19.7 in \cite{Sigmundlibro}). The latter is defined as follows: if $\delta_x$ denotes the Dirac measure with support $\{x\}$,  consider the $f-$invariant measure 
\[\mu_n:= \sum_{x\in \fix(f^n)} \frac{\delta_x}{\card(\fix(f^n))},\]
where $\fix(f)=\{x\in X\mid f(x)=x\}$. By the weak compactness of $\M(f)$ (the space of $f$-invariant probability measures, endowed with the topology of the weak convergence) and the specification property, this sequence has at least one ergodic weak accumulation point $\mu$ (the Bowen-Margulis measure) with maximal entropy, i.e. $h_{\mu}(f)=h(f)$ (here, $h(f)$ stands for the topological entropy of $f$; see~\eqref{topentropy1}). Note $\mu$ is non-atomic, ergodic and $\supp(\mu)=X$; see \cite{Sigmundlibro} for more details.

\begin{rek}
\label{bk}
We note that Brin-Katok's Theorem is punctually satisfied for $f$-homogeneous measures: one has, for each $x\in X$,
\begin{eqnarray*}
\lim_{\ve\to 0}\liminf_{n\to 0}\frac{-\log\mu(B(x,n,\ve))}{n}= \lim_{\ve\to 0}\limsup_{n\to 0}\frac{-\log\mu(B(x,n,\ve))}{n}=h_{\mu}(f).
\end{eqnarray*}
\end{rek}
\begin{proof}
By the definition of a homogeneous measure, for each $\ve>0$, there exist $0 <
\delta(\ve) < \ve$ and $c > 0$ such that, for each $n\in \N$ and each $x,y\in X$,
\[\mu(B(y,n,\delta(\ve)))\leq c\,\mu(B(x,n,\ve)).\]
Thus,
\[\lim_{\ve\to 0}\limsup(\inf)_{n\to \infty}-\frac{1}{n}\log \mu(B(y,n,\delta(\ve)))\geq \lim_{\ve\to 0}\limsup(\inf)_{n\to \infty}-\frac{1}{n}\log \mu(B(x,n,\ve)).\]
Analogously, given $\tilde{\ve}:=\delta(\ve)>0$, there exist $0 < \tilde{\delta}(\tilde{\ve}) < \tilde{\ve}$ and $\tilde{c} > 0$ such that
\[\lim_{\tilde{\ve}\to 0}\limsup(\inf)_{n\to \infty}-\frac{1}{n}\log \mu(B(x,n,\tilde{\delta}(\tilde{\ve})))\geq \lim_{\tilde{\ve}\to 0}\limsup(\inf)_{n\to \infty}-\frac{1}{n}\log \mu(B(y,n,\tilde{\ve})).\]

This proves that the limits do not depend on $x\in X$. The result follows now from Brin-Katok's Theorem.
\end{proof}

The next result is a direct consequence of Theorem~\ref{BKpunctually} and Remark~\ref{bk}.

\begin{cor}
\label{corigual}
Let $(X,f,\mu)$ be a dynamical system such that $\mu$ is an $f$-homogeneous measure and $f$ is a function which satisfies the hypothesis of Theorem~\ref{BKpunctually}. Then, for each $s<1$ and each $q>1$, one has
\[\frac{h_{\mu}(f)}{\log \Lambda}\leq D^-_{\mu}(q)\leq D_\mu^-(1)\le D_\mu^+(1)\leq D^+_{\mu}(s)\leq \frac{h_{\mu}(f)}{\log \lambda}.\]   
\end{cor}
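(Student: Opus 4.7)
The plan is essentially to package what has already been established, since Corollary~\ref{corigual} is merely a combination of Theorem~\ref{BKpunctually} with Remark~\ref{bk}. The only thing to verify is that, under the hypothesis that $\mu$ is $f$-homogeneous and that $f$ satisfies the bi-Lipschitz condition~\eqref{BLIP} with constants $\lambda,\Lambda>1$, the hypotheses of Theorem~\ref{BKpunctually} are met. The Lipschitz part is assumed outright, so the only remaining content is the punctual validity of Brin--Katok's Theorem.

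First, I would recall Definition~\ref{MU}: for every $\varepsilon>0$ there exist $0<\delta(\varepsilon)<\varepsilon$ and $c=c(\varepsilon)>0$ such that
\[\mu(B(y,n,\delta(\varepsilon)))\le c\,\mu(B(x,n,\varepsilon))\qquad\text{for all } n\in\N,\;x,y\in X.\]
Taking $-\tfrac{1}{n}\log$ on both sides, dividing by $n$, and letting first $n\to\infty$ and then $\varepsilon\to 0$, one sees (as carried out in the proof of Remark~\ref{bk}) that the quantities
\[\lim_{\varepsilon\to 0}\liminf_{n\to\infty}-\tfrac{1}{n}\log\mu(B(x,n,\varepsilon)),\qquad \lim_{\varepsilon\to 0}\limsup_{n\to\infty}-\tfrac{1}{n}\log\mu(B(x,n,\varepsilon))\]
are independent of $x\in X$. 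Combined with Brin--Katok's Theorem, which asserts that these quantities coincide with $h_\mu(f)$ for $\mu$-a.e.\ $x$, one obtains that the equality actually holds for \emph{every} $x\in X$. This is exactly the punctual Brin--Katok property required by Theorem~\ref{BKpunctually}.

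Having verified both hypotheses (punctual Brin--Katok and the bi-Lipschitz estimate), I would then invoke Theorem~\ref{BKpunctually} directly to conclude
\[\frac{h_{\mu}(f)}{\log \Lambda}\leq D^-_{\mu}(q)\leq D_\mu^-(1)\le D_\mu^+(1)\leq D^+_{\mu}(s)\leq \frac{h_{\mu}(f)}{\log \lambda}\]
for every $s<1$ and every $q>1$, which is the statement of the corollary. There is no real obstacle here: all the work is located in Theorem~\ref{BKpunctually} (where the generalized fractal dimensions must be controlled in terms of the local entropy of Bowen balls, with the Lipschitz bounds used to pass between Bowen balls and metric balls) and in Remark~\ref{bk} (which upgrades a.e.\ Brin--Katok to the punctual version in the homogeneous setting). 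Once these two statements are in hand, the corollary is immediate.
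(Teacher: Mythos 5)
Your proposal is correct and follows the paper's own route exactly: the paper states that Corollary~\ref{corigual} is a direct consequence of Remark~\ref{bk} (which upgrades Brin--Katok to a punctual statement for $f$-homogeneous measures via the two-sided application of the homogeneity inequality~\eqref{CMU}) together with Theorem~\ref{BKpunctually}, which is precisely the combination you carry out.
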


The next result, which is already known in the literature (see Theorem 2.5 in~\cite{Simpelaere} and \cite{Pesin1997}), is an extension of Young's formula (\cite{Young}) to the generalized fractal dimensions of the Bowen-Margulis measure associated to a $C^{1+\alpha}$-Axiom A system over a two-dimensional compact Riemannian manifold.

\begin{cor}
\label{quasiyoung}
Let $T:X\rightarrow X$ be a $C^{1+\alpha}$-Axiom A system ($\alpha>0$) over a two-dimensional compact Riemannian manifold $M$. Let $\mu$ be its Bowen-Margulis measure and let $\lambda_1(\mu)\geq\lambda_2(\mu)$ be its Lyapounov exponents. Then, for each $q\in\mathbb{R}$,
\begin{eqnarray}
\label{igualdad}
D^+_{\mu}(q)= D^-_{\mu}(q)=h_{\mu}(T)\left[ \frac{1}{\lambda_1}-\frac{1}{\lambda_2}\right].
\end{eqnarray}
\end{cor}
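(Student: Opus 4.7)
The plan is to reduce the statement to Corollary~\ref{corigual} applied to the conditional measures of $\mu$ along stable and unstable manifolds, and then assemble the result via the local product structure of the Bowen--Margulis measure on a Markov partition.

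First I would invoke the hyperbolic structure: since $T$ is a $C^{1+\alpha}$-Axiom~A map on a two-dimensional manifold, the non-wandering set admits a finite Markov partition $\{R_1,\dots,R_k\}$ with arbitrarily small diameter, and on each rectangle $R_i$ the Bowen--Margulis measure disintegrates as a local product $\mu|_{R_i} \cong \mu^u_i \otimes \mu^s_i$, where $\mu^u_i$ (resp.\ $\mu^s_i$) is the conditional measure on the one-dimensional unstable (resp.\ stable) plaques. The maximal-entropy/Gibbs character of $\mu$ implies that these conditionals are quasi-invariant under the holonomy between plaques and that, on each plaque, they are themselves $f$-homogeneous (in the sense of Definition~\ref{MU}) for the induced expanding/contracting dynamics; this is precisely the property that places us in the hypotheses of Corollary~\ref{corigual}.

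Next I would control the bi-Lipschitz constants. By Oseledets' theorem (applied $\mu$-a.e.\ in a two-dimensional setting where Lyapunov exponents are constant since $\mu$ is ergodic), $DT$ expands the unstable direction by a factor $e^{\lambda_1+o(1)}$ and contracts the stable direction by $e^{\lambda_2+o(1)}$. Working with $T^n$ for large $n$ and, if necessary, a Lyapunov (adapted) metric, one obtains on each unstable plaque constants $\lambda_n^u,\Lambda_n^u$ with $\tfrac{1}{n}\log \lambda_n^u$ and $\tfrac{1}{n}\log \Lambda_n^u$ both converging to $\lambda_1$, and the symmetric statement for $T^{-n}$ along stable plaques with rate $-\lambda_2>0$. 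Applying Corollary~\ref{corigual} to the one-dimensional system $(T^n,\mu^u_i)$ and letting $n\to\infty$ yields $D_{\mu^u_i}^{\pm}(q) = h_{\mu}(T)/\lambda_1$ for every $q>1$ and (by the rightmost and leftmost inequalities) every $s<1$; the analogous argument for $(T^{-n},\mu^s_i)$ gives $D_{\mu^s_i}^{\pm}(q)=h_{\mu}(T)/(-\lambda_2)$, because $h_{\mu}(T^{-1})=h_{\mu}(T)$.

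Finally, I would translate these conditional dimensions back to $\mu$ via the product structure. For a product of Borel measures with bounded local dimensions, the energy integral~\eqref{ef} factors, up to uniformly bounded multiplicative errors coming from the quasi-product nature of balls in $R_i$ (which can be sandwiched between products of one-dimensional balls in the stable and unstable plaques thanks to the local transversality and the H\"older continuity of the stable/unstable foliations guaranteed by $C^{1+\alpha}$ regularity). Taking logarithms, dividing by $(q-1)\log\varepsilon$ and letting $\varepsilon\downarrow 0$ gives
\[
D_\mu^{\pm}(q) \;=\; D_{\mu^u_i}^{\pm}(q)+D_{\mu^s_i}^{\pm}(q) \;=\; h_{\mu}(T)\!\left[\frac{1}{\lambda_1}-\frac{1}{\lambda_2}\right],
\]
as claimed, first for $q>1$ and $s<1$ and then for $q=1$ by Proposition~\ref{BGT1}. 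For $q<0$, one extends the argument by observing that the conditionals $\mu^{u/s}_i$ are exact-dimensional (a consequence of the homogeneity argument above), so both $D_{\mu^{u/s}_i}^{\pm}(q)$ are independent of $q$, and the product identity persists.

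The main obstacle is the quasi-product step: the generalized fractal dimensions are not in general additive on products, so establishing the sandwich $B(x,c\varepsilon)\subset B^u(x^u,\varepsilon)\times B^s(x^s,\varepsilon)\subset B(x,C\varepsilon)$ uniformly in $x$ and in $\varepsilon$, and verifying that the holonomy maps distort the one-dimensional energies only by uniformly bounded factors, is the technical heart of the proof; here the two-dimensional hypothesis and the H\"older regularity of the foliations are used crucially.
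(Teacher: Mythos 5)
Your plan has two genuine gaps, and the first one is fatal to the strategy itself. You propose to apply Corollary~\ref{corigual} to $(T^n,\mu^u_i)$ and let $n\to\infty$, claiming that the uniform bi-Lipschitz constants of $T^n$ on unstable plaques satisfy $\tfrac{1}{n}\log\lambda^u_n,\ \tfrac{1}{n}\log\Lambda^u_n\to\lambda_1(\mu)$. Uniform hyperbolicity gives no such thing: since $\log\|DT^n|_{E^u(x)}\|$ is a Birkhoff sum of the continuous function $\log\|DT|_{E^u}\|$, one has $\tfrac{1}{n}\sup_x\log\|DT^n|_{E^u(x)}\|\to\max_{\nu\in\M(T)}\int\log\|DT|_{E^u}\|\,d\nu$ and $\tfrac{1}{n}\inf_x\log\|DT^n|_{E^u(x)}\|\to\min_{\nu\in\M(T)}\int\log\|DT|_{E^u}\|\,d\nu$, i.e.\ the extremal unstable exponents over \emph{all} invariant measures, not the exponent of the Bowen--Margulis measure. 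Unless $\log\|DT|_{E^u}\|$ is cohomologous to a constant, periodic orbits realize exponents strictly on both sides of $\lambda_1(\mu)$, so no power $T^n$ satisfies~\eqref{BLIP} with rates approaching $\lambda_1(\mu)$; your method can only deliver the weaker sandwich $h_\mu(T)/\lambda_{\max}\le D\le h_\mu(T)/\lambda_{\min}$. Passing to a Lyapunov adapted metric does not repair this: such metrics are only measurable, defined a.e., and not uniformly equivalent to the Riemannian metric, while Corollary~\ref{corigual} needs a genuine metric with uniform constants (and $D^{\pm}_\mu(q)$ is not preserved under non-uniformly-equivalent changes of metric). A further formal problem is that Corollary~\ref{corigual} is not even applicable as stated: $\mu^u_i$ is not invariant under $T^n$ (no plaque is mapped into itself), so ``the one-dimensional system $(T^n,\mu^u_i)$'' requires an induced expanding-map construction, with an identification of its entropy with $h_\mu(T)$, which you do not supply.

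The second gap is the assertion, used for $q<0$, that exact dimensionality of the conditionals forces $D^{\pm}_{\mu^{u/s}_i}(q)$ to be independent of $q$. This implication is false: a biased Bernoulli measure on the full shift is exact dimensional, yet its $L^q$-spectrum $D_\mu(q)=\log\bigl(p^q+(1-p)^q\bigr)/\bigl((1-q)\log 2\bigr)$ is strictly decreasing in $q$. What does force $q$-independence is an \emph{everywhere}, uniform two-sided bound on local dimensions over the support, which is exactly the hypothesis of Theorem~\ref{torigual0} and is much stronger than a.e.\ exact dimensionality. This is also precisely how the paper proceeds, and why it never needs your product decomposition: following Young's Lemma~3.2 pointwise, it uses the $f$-homogeneity of the Bowen--Margulis measure (Remark~\ref{bk}, adapted to bilateral Bowen balls) to upgrade the Brin--Katok entropy estimates from a.e.\ to every point, handles the non-uniform expansion through the Oseledec--Pesin charts $K(x)$, $A(x)$ rather than uniform Lipschitz constants, and then concludes with a single application of Theorem~\ref{torigual0} and Proposition~\ref{BGT1} to get~\eqref{igualdad} for all $q$ at once. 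In particular, the holonomy/product step you flag as the ``technical heart'' (which indeed would need bi-Lipschitz, not merely H\"older, product charts, and care with the flipped inequalities when $q<1$) is avoided entirely by the paper's route.
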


We present an alternative proof of this result which is directly based on the proof of Young's Theorem and Theorem~\ref{torigual0} (see Section~\ref{AXIOM-A}).

\subsection{Expansive homeomorphisms}
\label{EXPHOMEO}

We are also interested in dimensional properties of invariant measures for expansive homeomorphisms.
\begin{defi}
\label{homeoexpansive}
Let $X$ be a metrizable space, and let $f:X \rightarrow X$ be a homeomorphism. $f$ is said to be expansive if there exists a $\delta > 0$ such that, for each pair of different points $x, y \in X$, there exists an $n\in\Z$ such that $d(f^n(x),f^n(y)) > \delta$, where $d$ is any metric which induces the topology of $X$. 
\end{defi}

Note that expansivity is a topological notion, i.e., it does not depend on the choice of a particular (compatible) metric under consideration, although the expansivity constant $\delta$ may depend on $d$.

Examples of expansive homeomorphisms are: Axiom A systems (see \cite{Sigmundlibro}), homeomorphisms that admit a Lyapunov function (see \cite{Lewowicz}), examples 1 and 2 in~\cite{Wine}, the shift system with finite alphabet, pseudo-Anosov homeomorphisms, quasi-Anosov diffeomorphisms, etc.

The following result shows that if $X$ is a compact metrizable space, then a homeomorphism $f:X\rightarrow X$ is expansive if $X$ admits a hyperbolic metric (the converse of this statement is also true; see Theorem 5.3 in \cite{Fathi1989}).

\begin{teo}[Theorem 5.1 in \cite{Fathi1989}]
\label{expanshyperbolic}
If $f:X\rightarrow X$ is an expansive homeomorphism over the compact metrizable space $X$, then there exist a metric $d$ on $X$, compatible with its topology, and numbers $k>1$, $\ve>0$ such that, for each $x,y\in X$, 
\begin{eqnarray}
\label{expansivemetric}
\max\{d(f(x),f(y)), d(f^{-1}(x), f^{-1}(y))\}\geq \min\{k\,d(x,y),\ve\}.
\end{eqnarray}
 Moreover, both $f$ and $f^{-1}$ are Lipschitz for $d$. The metric $d$ is called a hyperbolic metric for $X$.
\end{teo}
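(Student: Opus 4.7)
The plan is to construct $d$ as a dynamical rescaling of any initial compatible metric $\rho$ on $X$. First I will fix such a $\rho$ and an expansivity constant $\delta>0$, chosen small enough that $2\delta$ is still an expansivity constant. For $n\ge 0$ I will introduce the closed symmetric neighborhoods of the diagonal
$$V_n := \{(x,y)\in X\times X : \rho(f^i(x),f^i(y))\le \delta \text{ for all } |i|\le n\}.$$
Expansivity forces $\bigcap_{n\ge 0} V_n=\Delta$, and by compactness of $X\times X$ the decreasing family $\{V_n\}$ becomes a fundamental system of closed neighborhoods of $\Delta$. The key dynamical shift property I will exploit is: if $(x,y)\in V_n\setminus V_{n+1}$, the defining inequality must fail at $i=\pm(n+1)$, so at least one of $(f(x),f(y))$ or $(f^{-1}(x),f^{-1}(y))$ lies in $V_{n-1}\setminus V_n$.

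Next, I will fix $k>1$ and define the symmetric gauge $\psi(x,y):=k^{-L(x,y)}\in[0,1]$, with $L(x,y):=\sup\{n\ge 0:(x,y)\in V_n\}$ (using the conventions $\sup\emptyset=0$ and $\sup\mathbb{Z}_{\ge 0}=\infty$, so $\psi$ vanishes exactly on $\Delta$). A short case analysis on the value of $L(x,y)$, applying the shift property, yields the hyperbolic inequality at the level of the gauge
$$\max\{\psi(f(x),f(y)),\psi(f^{-1}(x),f^{-1}(y))\}\ge \min\{k\,\psi(x,y),\,1\},$$
while the inclusion $V_{n+1}\subset V_n$ gives the one-sided bound $\psi(f^{\pm 1}(x),f^{\pm 1}(y))\le k\,\psi(x,y)$. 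I then promote $\psi$ to a genuine metric through Frink's metrization lemma,
$$d(x,y):=\inf\Bigl\{\sum_{i=0}^{N-1}\psi(z_i,z_{i+1}):N\ge 1,\ z_0=x,\ z_N=y\Bigr\}.$$

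The main obstacle will be establishing the weak triangle inequality $\psi(x,y)\le K\max\{\psi(x,z),\psi(z,y)\}$ that Frink's lemma needs in order to deliver a two-sided comparison $c\,\psi\le d\le\psi$. Here expansivity and compactness must be combined carefully: the $\rho$-triangle inequality shows that $(x,z),(z,y)\in V_n$ implies $\rho(f^i(x),f^i(y))\le 2\delta$ for $|i|\le n$, so the problem reduces to showing that $W_n:=\{(x,y):\rho(f^i(x),f^i(y))\le 2\delta \text{ for } |i|\le n\}$ lies in $V_{n-s}$ for some fixed $s\in\mathbb{N}$ independent of $n$. Because $2\delta$ is also an expansivity constant, $\bigcap_n W_n=\Delta$; compactness then produces an $N$ with $W_N\subset V_0$, and translating this along orbits (for $|j|\le n-N$, the pair $(f^j(x),f^j(y))$ lies in $W_N$ whenever $(x,y)\in W_n$) gives $W_n\subset V_{n-N}$ for $n\ge N$, so $s=N$ works with $K=k^N$. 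With Frink's comparison in force, $d$ is topologically equivalent to $\psi$ and hence compatible with the topology of $X$ (since $\{V_n\}$ is already a neighborhood basis of $\Delta$); the hyperbolic inequality transfers from $\psi$ to $d$ after absorbing $c$ into a slightly smaller $k'>1$ and a suitable $\varepsilon>0$; and the one-sided bound yields the Lipschitz estimate for $f^{\pm 1}$ with respect to $d$.
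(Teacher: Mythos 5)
Your construction of the sets $V_n$, the level--shift property, the gauge $\psi=k^{-L}$, and the compactness argument giving $V_n\circ V_n\subset V_{n-N}$ (hence the weak triangle inequality with constant $K=k^N$) are all correct, and this is indeed the skeleton of Fathi's own argument (the paper under review does not prove this statement; it only cites \cite{Fathi1989}). The gap is in the very last step, and it is not cosmetic. The chain/Frink construction produces a metric comparable to $\psi$ only when the weak triangle constant is at most $2$ (in the three--point form that the induction over chains actually uses), so you are forced to take $k^N\le 2$, i.e.\ $k\le 2^{1/N}$; snowflaking $\psi\mapsto\psi^{\epsilon}$ does not escape this constraint, since it replaces $k$ by $k^{\epsilon}$ as well. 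Moreover, the comparison the lemma delivers is $c\,\psi\le d\le\psi$ with $c\le 1/2$ (classically $c=1/2$ at best, $1/4$ in most write-ups). Transferring the gauge inequality to $d$ then yields only
\[
\max\{d(f(x),f(y)),\,d(f^{-1}(x),f^{-1}(y))\}\ \ge\ c\,\min\{k\,\psi(x,y),1\}\ \ge\ \min\{ck\,d(x,y),\,c\},
\]
and the surviving expansion factor is $ck\le\tfrac12\cdot 2^{1/N}\le 1$, strictly less than $1$ as soon as $N\ge 2$. There is no ``slightly smaller $k'>1$'' into which $c$ can be absorbed: the admissible $k$ is capped by the same integer $N$ that generates the loss $c$, and their product never exceeds $1$. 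As written, your concluding inequality is vacuous.

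What is missing is a device that decouples the expansion factor from the metrization loss; this is precisely the extra step in Fathi's proof. Because the expansivity failure stays on one side of the orbit, your shift property iterates: in the appropriate direction, $L(f^jx,f^jy)=L(x,y)-j$ for all admissible $j$. Hence, for a fixed integer $m$ with $k^m\ge 2/c$ (so $m$ of order $N$), the metric $d$ you built does satisfy the $m$-step inequality
\[
\max\{d(f^m x,f^m y),\,d(f^{-m}x,f^{-m}y)\}\ \ge\ \min\{2\,d(x,y),\,\varepsilon_0\},
\]
since over $m$ steps the gauge grows by the factor $k^m$, which now beats the comparison constants. One then converts expansion of the pair $(f^m,f^{-m})$ into expansion of $(f,f^{-1})$ by passing to the adapted metric
\[
d'(x,y):=\max_{0\le i<m}\lambda^{-i}\max\{d(f^ix,f^iy),\,d(f^{-i}x,f^{-i}y)\},\qquad \lambda:=2^{1/m}>1,
\]
which is bi--Lipschitz equivalent to $d$ (so compatibility with the topology and the Lipschitz property of $f^{\pm1}$ are preserved) and satisfies $\max\{d'(fx,fy),\,d'(f^{-1}x,f^{-1}y)\}\ge\min\{\lambda\,d'(x,y),\varepsilon\}$: the terms with $1\le i<m$ in $\lambda\,d'(x,y)$ are dominated index-by-index by terms of $d'(fx,fy)$ or of $d'(f^{-1}x,f^{-1}y)$, while the $i=0$ term is handled exactly by the displayed $m$-step inequality. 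Without this (or an equivalent) step, the construction cannot produce any expansion constant $k>1$.
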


Aside from the results stated in Theorems~\ref{torigual0} and~\ref{BKpunctually}, we also have some estimates for the generalized fractal dimensions of invariant measures of expansive homeomorphisms (with respect to the hyperbolic metric given by Theorem~\ref{expanshyperbolic}) in terms of the metric and the topological entropies.

\begin{teo}
\label{allzerodimensions}
Let $f:X\rightarrow X$ be an expansive homeomorphism over a compact metric space $X$, and let $d$ be the respective hyperbolic metric. Then, for each invariant measure $\mu\in\M(f)$ and each $q\in[0,1)$, one has $D^{+}_{\mu}(q)\leq \frac{2h(f)}{\log k}$, where $k$ is defined in the statement of Theorem~\ref{expanshyperbolic}.
\end{teo}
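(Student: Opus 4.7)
The plan is to reduce the problem to a topological bound: I will first show that the upper box-counting dimension of $X$ (with respect to the hyperbolic metric $d$) is at most $2h(f)/\log k$, and then invoke the standard fact that $D^+_\mu(q)$ is dominated by the upper box-counting dimension of the ambient space whenever $q\in[0,1)$. Let $\ve_*>0$ and $k>1$ be the constants furnished by Theorem~\ref{expanshyperbolic} (the paper's $\ve$ appearing in~\eqref{expansivemetric}), and define the two-sided Bowen ball $B^\pm(x,n,\ve_*)=\{y\in X:d(f^i(x),f^i(y))<\ve_*\text{ for all }|i|\le n\}$. The key geometric step is the inclusion
\[
B^\pm(x,n,\ve_*)\ \subset\ B\!\left(x,\ve_*/k^n\right),
\]
which I plan to establish by induction on $n$: if $r=d(x,y)<\ve_*/k$, inequality~\eqref{expansivemetric} forces $\max\{d(f(x),f(y)),d(f^{-1}(x),f^{-1}(y))\}\ge kr$, and applying the induction hypothesis to whichever pair realised the maximum yields $\max_{|i|\le n}d(f^i(x),f^i(y))\ge\min(k^n r,\ve_*)$, whence the inclusion.

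Next, I translate this into a covering estimate. Let $N(\delta)$ be the minimal number of open $d$-balls of radius $\delta$ needed to cover $X$, and let $N(m,\ve_*)$ be the minimal number of one-sided Bowen balls $B(\cdot,m,\ve_*)$ required to cover $X$. Since $B^\pm(x,n,\ve_*)=f^n\bigl(B(f^{-n}(x),2n,\ve_*)\bigr)$ and $f^n$ is a homeomorphism, a cover of $X$ by one-sided Bowen balls of size $2n$ induces one by two-sided Bowen balls of size $n$, and by the previous step also one by $d$-balls of radius $\ve_*/k^n$; hence $N(\ve_*/k^n)\le N(2n,\ve_*)$. Given small $\delta>0$, pick $n$ with $\ve_*/k^{n+1}\le\delta<\ve_*/k^n$; taking logarithms, dividing by $-\log\delta\sim n\log k$ and using $\limsup_{m\to\infty}\tfrac{1}{m}\log N(m,\ve_*)\le h(f)$, I obtain
\[
\limsup_{\delta\to 0}\frac{\log N(\delta)}{-\log\delta}\ \le\ \frac{2\,h(f)}{\log k}.
\]

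It remains to bound $D^+_\mu(q)$ by this upper box-counting dimension for $q\in[0,1)$. Starting from a cover of $X$ by $N(\delta)$ balls of radius $\delta$, I refine it into a Borel partition $\{A_j\}$ with $A_j\subset B(y_j,\delta)$; for every $x\in A_j$ one has $B(x,2\delta)\supset A_j$, hence $\mu(B(x,2\delta))\ge\mu(A_j)$. Since $q-1<0$,
\[
I_\mu(q,2\delta)=\int\mu(B(x,2\delta))^{q-1}\,d\mu\ \le\ \sum_j\mu(A_j)^q\ \le\ N(\delta)^{1-q},
\]
the last inequality being Jensen applied to the concave map $t\mapsto t^q$ on the probability vector $\{\mu(A_j)\}$ (the case $q=0$ is handled by counting). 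Dividing $\log I_\mu(q,2\delta)\le(1-q)\log N(\delta)$ by the positive quantity $(q-1)\log(2\delta)$ and letting $\delta\to 0$ yields $D^+_\mu(q)\le 2h(f)/\log k$. The delicate point is the induction of the first step: because~\eqref{expansivemetric} controls only the \emph{maximum} of the forward and backward iterates, one has to verify that the greedy choice of neighbour at each step produces a coherent orbit segment whose extreme iterate stays within the window $[-n,n]$, which is precisely what the induction guarantees.
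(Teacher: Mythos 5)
Your proof is correct, and while it shares the paper's geometric core, its measure-theoretic half follows a genuinely different route. Both arguments hinge on the same key fact, which you prove by induction and the paper extracts from Theorem~\ref{expanshyperbolic}: inequality~\eqref{expansivemetric} forces the two-sided Bowen ball with window $[-n,n]$ and radius $\ve_*$ into the metric ball of radius $\ve_* k^{-n}$, so generating sets for $2n+O(1)$ iterates control coverings at scale $\ve_* k^{-n}$; the factor $2$ in $2h(f)/\log k$ is exactly the price of the two-sided window in both proofs (and your induction, including the bookkeeping that the shifted windows stay inside $[-(n+1),n+1]$, is sound). The difference lies in how $D^{+}_{\mu}(q)$ is tied to these covering numbers. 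The paper never leaves the measure: it compares $I_{\mu}(q,k^{-n}r)$ with the Bowen-ball energy $\int\mu(B(x,n,r))^{q-1}d\mu$, bounds the latter at $q=0$ by the cardinality of a $(2n+1,r/2)$-generating set, reduces general $q\in[0,1)$ to $q=0$ by citing the monotonicity of $q\mapsto D^{+}_{\mu}(q)$ (\cite{Cutler1995,Barbaroux}), and passes from the geometric subsequence $k^{-n}r$ to the full $\limsup$ by citing Lemma~6.2 of \cite{Fathi1988}. You instead factor through the ambient space: first $\overline{\dim}_B X\le 2h(f)/\log k$ as a purely metric statement (in substance Theorem~5.4 of \cite{Fathi1989}, which the paper mentions only as a point of comparison), then $D^{+}_{\mu}(q)\le\overline{\dim}_B X$ for all $q\in[0,1)$ via the partition-plus-Jensen estimate $I_{\mu}(q,2\delta)\le N(\delta)^{1-q}$. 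Your route buys self-containedness --- the Jensen step subsumes the monotonicity citation and treats all $q\in[0,1)$ at once, and your bracketing $\ve_*/k^{n+1}\le\delta<\ve_*/k^n$ replaces the appeal to Fathi's Lemma~6.2 --- and it makes transparent that invariance of $\mu$ is never used, so the bound in fact holds for every Borel probability measure on $X$. What the paper's route keeps is the sharper intermediate quantity $D^{+}_{\mu}(0)$, which sees only $\supp(\mu)$ and can be much smaller than $\overline{\dim}_B X$ for measures with thin support, although both intermediaries land on the same final bound.
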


\begin{rek} One should compare Proposition~\ref{allzerodimensions} with Theorem~5.4 in~\cite{Fathi1989}.
\end{rek}

\begin{teo}
\label{allzerodimensions1}
Let $f:X\rightarrow X$ be an expansive homeomorphism over a compact metric space $X$, and let $d$ be the respective hyperbolic metric. Then, for each invariant measure $\mu\in\M(f)$ and each $q\ge 1$, one has $D^{+}_{\mu}(q)\leq h_\mu(f)\log k$, where $k$ is defined in the statement of Theorem~\ref{expanshyperbolic}.
\end{teo}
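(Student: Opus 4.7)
The plan is to reduce to the case $q=1$ via Proposition~\ref{BGT1}, which yields $D_\mu^+(q)\le D_\mu^+(1)$ for every $q\ge 1$; it then suffices to bound $D_\mu^+(1)$ from above. Let $c>0$ denote the constant playing the role of $\ve$ in Theorem~\ref{expanshyperbolic}, fix a finite measurable partition $\mathcal{P}$ of $X$ with $\diam\mathcal{P}<\eta$ for some $\eta<c/k$, and observe that under these choices the hyperbolic inequality~\eqref{expansivemetric} takes the purely multiplicative form $\max\{d(f(x),f(y)),d(f^{-1}(x),f^{-1}(y))\}\ge k\,d(x,y)$ whenever both sides remain below $\eta$.

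The key geometric step is the inclusion $P_n(x)\subset B(x,\eta k^{-n})$, where $P_n(x)$ is the element of the two-sided refinement $\mathcal{P}_{-n}^{n}:=\bigvee_{i=-n}^{n}f^{-i}\mathcal{P}$ containing $x$. For $y\in P_n(x)$ set $r_i:=d(f^i(x),f^i(y))$, so that $r_i<\eta$ for all $|i|\le n$, and the hyperbolic inequality yields $\max\{r_{i+1},r_{i-1}\}\ge k\,r_i$ at each interior index. Building a path from $i_0=0$ by always moving to a neighbor realising this expansion produces a monotone path: once we have moved from $i_{j-1}$ to $i_j$ with $r_{i_j}\ge k\,r_{i_{j-1}}$, returning would require $r_{i_{j-1}}\ge k\,r_{i_j}\ge k^2\,r_{i_{j-1}}$, which is impossible for $k>1$, so the only admissible continuation is the other neighbor. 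After exactly $n$ such monotone steps the path reaches the boundary, yielding $r_{\pm n}\ge k^n r_0$ and hence $r_0\le\eta k^{-n}$.

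Given $\ve>0$ small, I then choose $n:=\lceil\log(\eta/\ve)/\log k\rceil$ so that $P_n(x)\subset B(x,\ve)$; consequently $\mu(B(x,\ve))\ge\mu(P_n(x))$, and integrating gives $-\int\log\mu(B(x,\ve))\,d\mu\le H(\mathcal{P}_{-n}^{n})=H(\mathcal{P}_0^{2n})$, the last equality by $f$-invariance of $\mu$. By the Kolmogorov--Sinai formula, $H(\mathcal{P}_0^{2n})\le(2n+1)(h_\mu(f,\mathcal{P})+\delta)\le(2n+1)(h_\mu(f)+\delta)$ for any $\delta>0$ and sufficiently large $n$. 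Dividing by $\log(1/\ve)$ and sending $\ve\downarrow 0$ (so $n\to\infty$) and then $\delta\downarrow 0$ produces the desired upper bound on $D_\mu^+(1)$, and hence on $D_\mu^+(q)$ for all $q\ge 1$ by Proposition~\ref{BGT1}.

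The main obstacle is the geometric inclusion $P_n(x)\subset B(x,\eta k^{-n})$: the disjunctive form of the hyperbolic inequality---only one of the forward or backward direction is required to expand at each site---makes the path-building argument delicate, although the strict geometric growth by factor $k>1$ rules out backtracking and fixes the path length at $n$. Once this lemma is available, the remainder is a standard Shannon--McMillan--Breiman--style entropy calculation.
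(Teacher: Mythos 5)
Your strategy is essentially the paper's: reduce to $q=1$ via Proposition~\ref{BGT1}, use the hyperbolic metric to squeeze two-sided dynamical cells into metric balls of radius $\eta k^{-n}$, and then run a Brin--Katok-style entropy estimate. The differences are streamlinings rather than a new route: (a) you prove the geometric inclusion directly for the partition cells $P_n(x)$ by an explicit path/no-backtracking argument, whereas the paper factors it as $C_n^{\xi}(x)\subset B(x,n,r)$ (two-sided Bowen ball) followed by the inclusion $B(x,n,r)\subset B(x,k^{-n}r)$, which it imports from Theorem~\ref{expanshyperbolic} (Fathi) without proof --- your path lemma is correct, including the exclusion of the degenerate case $r_0=0$; (b) by choosing $n=n(\ve)$ for \emph{every} small $\ve$ you control the full $\limsup_{\ve\downarrow 0}$ directly, which lets you dispense with the paper's appeal to monotonicity of the energy function and the subsequence-interpolation lemma (Lemma~A.6 of~\cite{Moritz}) used to pass from the geometric subsequence $k^{-n}r$ to all $\ve$. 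Both simplifications are sound and arguably cleaner.

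There is, however, one discrepancy you glossed over: the constant. Your computation gives $\frac{2n+1}{\log(1/\ve)}\to\frac{2}{\log k}$, so what you actually prove is $D^+_\mu(1)\le \frac{2\,h_\mu(f)}{\log k}$, not the stated $h_\mu(f)\log k$; since $\frac{2}{\log k}>\log k$ whenever $k<e^{\sqrt 2}$, your bound does not imply the stated one, so your closing claim that this ``produces the desired upper bound'' is strictly speaking false. To be fair, the paper's own proof has exactly the same mismatch: its Claim asserts the bound $h_\mu(f)\log k$, but the displayed derivation (dividing by $\log k^{-n}r=-n\log k+\log r$) yields $h_\mu(f)/\log k$, and in fact $2h_\mu(f)/\log k$ once one accounts for the two-sided join $\xi_n=\bigvee_{i=-n}^{n}f^{-i}\xi$ having $2n+1$ factors, so that $\limsup_n H(\xi_n)/n=2h_\mu(f,\xi)$. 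The factor $\log k$ appearing as a multiplier in the theorem's statement appears to be an error in the paper (compare the form $\frac{2h(f)}{\log k}$ in Theorem~\ref{allzerodimensions}); your argument proves what the paper's argument actually proves. You should have stated the bound your estimate yields, namely $\frac{2h_\mu(f)}{\log k}$, and flagged that it differs from the bound as written, rather than asserting agreement.
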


Let $T:X\rightarrow X$ be a $C^{1}$-Axiom A system over a two-dimensional compact Riemannian manifold $M$. It is known that such transformation is an expansive homeomorphism (see \cite{Sigmundlibro}).

Let $\mathcal{M}(T)$ be the space of all $T$-invariant probability measures,  endowed with the weak topology (that is the coarsest topology for which the net $\{\mu_\alpha\}$ converges to $\mu$ if, and only if, for each bounded and continuous function $\varphi$, $\int \varphi d\mu_\alpha\rightarrow \int \varphi d\mu$). Theorem~6 in~\cite{Sigmund1970} states that $\{\mu\in\M(T)\mid h_\mu(T)=0\}$ is a residual subset of $\M(T)$. The next result is a direct consequence of this fact and Theorem~\ref{allzerodimensions1}.

\begin{teo}
\label{zerocorrandhausdorff}
Let $T:X\rightarrow X$ be a $C^1$-Axiom A, and let $q\geq 1$. Then, the set $CD_0:=\{\mu\in \M(T)\mid D_{\mu}^+(q)=0$, under the hyperbolic metric $d\}$ is residual in $\M(T)$.
\end{teo}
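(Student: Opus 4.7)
The plan is to combine three ingredients that the paper has already assembled, with essentially no additional computation required. First, I would observe that any $C^{1}$-Axiom A diffeomorphism is an expansive homeomorphism on the compact manifold $M$, so Theorem~\ref{expanshyperbolic} furnishes a compatible hyperbolic metric $d$ on $X$ with expansion constant $k>1$. Working with this metric $d$, the hypotheses of Theorem~\ref{allzerodimensions1} are satisfied, and hence for every $\mu\in\M(T)$ and every $q\geq 1$,
\[
0\leq D^{+}_{\mu}(q)\leq h_{\mu}(T)\,\log k.
\]
The lower bound $D^{+}_{\mu}(q)\geq 0$ for $q\geq 1$ is immediate from Definition~\ref{gfd}: when $q>1$ the integrand $\mu(B(x,\varepsilon))^{q-1}\leq 1$, so $\log I_{\mu}(q,\varepsilon)\leq 0$, and $(q-1)\log\varepsilon<0$ for small $\varepsilon$; the case $q=1$ is analogous.

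With this two-sided bound in hand, the inclusion
\[
\{\mu\in \M(T)\mid h_{\mu}(T)=0\}\;\subseteq\;\{\mu\in \M(T)\mid D^{+}_{\mu}(q)=0\}=CD_{0}
\]
is automatic: if $h_{\mu}(T)=0$ then the sandwich forces $D^{+}_{\mu}(q)=0$.

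Finally I would invoke Theorem~6 in~\cite{Sigmund1970}, which the authors cite in the paragraph preceding the statement, to assert that the left-hand set is a residual subset of $\M(T)$ in the weak topology. Since any superset of a residual set is itself residual (residuality being preserved under taking supersets by definition), the set $CD_{0}$ is residual in $\M(T)$, which is precisely the conclusion. There is no genuine obstacle here; the work has been done in proving Theorem~\ref{allzerodimensions1}, and the only thing to be careful about is to remain consistently with the hyperbolic metric $d$ throughout, since expansivity and the constant $k$ are only meaningful once that metric has been fixed.
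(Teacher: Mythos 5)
Your proposal is correct and follows essentially the same route as the paper, which presents Theorem~\ref{zerocorrandhausdorff} as a direct consequence of Sigmund's result that $\{\mu\in\M(T)\mid h_\mu(T)=0\}$ is residual (Theorem~6 in~\cite{Sigmund1970}) combined with the bound $D^{+}_{\mu}(q)\leq h_{\mu}(T)\log k$ of Theorem~\ref{allzerodimensions1} under the hyperbolic metric. The two points the paper leaves implicit---the nonnegativity of $D^{+}_{\mu}(q)$ for $q\geq 1$ and the fact that supersets of residual sets are residual---are exactly the ones you spell out, so nothing is missing.
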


Theorems \ref{Pesincorrelation} and \ref{zerocorrandhausdorff} may be combined with Proposition~\ref{BGT1} in order to produce the following result. Let $q\in\mathbb{N}\setminus\{1\}$; if $\mu\in CD_0\cap\M_e$ (where $\M_e:=\{\mu\in\M(T)\mid\mu$ is ergodic$\}$; see~\cite{Sigmund1970} for a proof that this set is generic), then there exists a Borel set $Z\subset X$, $\mu(Z)=1$, such that for each $x\in Z$, one has $\overline{\alpha}_q(x)= D_{\mu}^+(q)=0$.

This means that for each $x\in Z$ and each $\alpha>0$, there exists a $\delta=\delta(x,\alpha)>0$ such that if $0<\ve<\delta$, then there exists an $N=N(x,\alpha,\delta)\in \N$ such that, for each $n>N$, one has $C_q(x,n,\ve)\geq \ve^{(q-1)\alpha}$. Thus, one has $\gamma=\card\,\{(i_1\cdots i_q)\in \{0,1,\cdots, n\}^q\mid d(T^{i_j}(x),T^{i_l}(x))\leq \ve$ for each $j,l=0,\ldots,q\}\geq \ve^{(q-1)\alpha} \, n^q$ (recall that $d$ is the hyperbolic metric of $X$), which means that $\gamma$ is of order $n^q$ for $n$ large enough. The conclusion is that the orbit of a typical point (with respect to $\mu$) is very ``tight'' (it is some sense, similar to a periodic orbit).

The next result is a direct consequence of the proof of Theorem~\ref{zerocorrandhausdorff}.

\begin{cor}
  Let $X$ be a compact metric space, let $f:X\rightarrow X$ be an expansive homeomorphism  and let $q\ge 1$. If there exists  $\mu\in\mathcal{M}(f)$ such that $D^{+}_{\mu}(q)>0$ (with respect to a hyperbolic metric), then $h(f)\geq h_{\mu}(f)>0$.
\end{cor}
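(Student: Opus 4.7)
The plan is to derive the conclusion directly from Theorem~\ref{allzerodimensions1} together with the variational principle for entropy. Theorem~\ref{allzerodimensions1} already provides, for every invariant measure $\mu\in\M(f)$ and every $q\ge 1$, the bound
\[
D^{+}_{\mu}(q)\ \leq\ h_{\mu}(f)\,\log k,
\]
where $k>1$ is the expansion constant supplied by the hyperbolic metric of Theorem~\ref{expanshyperbolic}. This is precisely the estimate used (in contrapositive form) to prove Theorem~\ref{zerocorrandhausdorff}, so invoking it here is in the spirit of the stated remark that the corollary follows from that proof.

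Concretely, I would proceed as follows. First, fix $q\ge 1$ and the measure $\mu\in\M(f)$ with $D^{+}_{\mu}(q)>0$, computed with respect to a hyperbolic metric $d$ on $X$ coming from Theorem~\ref{expanshyperbolic}. Second, apply Theorem~\ref{allzerodimensions1} to this $\mu$ and $q$ to obtain $0<D^{+}_{\mu}(q)\leq h_{\mu}(f)\,\log k$. Since $k>1$ we have $\log k>0$, and hence dividing gives $h_{\mu}(f)\geq D^{+}_{\mu}(q)/\log k>0$. Finally, the variational principle for topological entropy over a compact metric space,
\[
h(f)\ =\ \sup_{\nu\in\M(f)} h_{\nu}(f),
\]
yields $h(f)\geq h_{\mu}(f)>0$, which is the desired conclusion.

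There is essentially no obstacle: the whole content has been packed into Theorem~\ref{allzerodimensions1}, which already does the work of converting positivity of a generalized fractal dimension into positivity of the metric entropy, so the corollary is a one-line algebraic rearrangement plus the variational principle. The only point that deserves a brief remark is that the dependence of the bound on the particular hyperbolic metric $d$ is harmless, because Theorem~\ref{expanshyperbolic} guarantees at least one such metric exists whenever $f$ is an expansive homeomorphism on a compact metric space, which is exactly our setting.
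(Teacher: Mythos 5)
Your proposal is correct and follows essentially the same route as the paper: the corollary is exactly the contrapositive reading of Theorem~\ref{allzerodimensions1} (the bound $D^{+}_{\mu}(q)\le h_{\mu}(f)\log k$ with $\log k>0$), combined with the variational principle $h(f)\ge h_{\mu}(f)$, which is the content behind the paper's remark that it follows from the proof of Theorem~\ref{zerocorrandhausdorff}. Nothing further is needed.
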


Since each $C^{1}$-Axiom A system over a compact smooth manifold $M$ is an expansive homeomorphism (see \cite{Sigmundlibro}), it is easy to check that the usual metric in $M$ satisfies~\eqref{expansivemetric}.

\begin{cor}
\label{entropymax}
 Let $f:X\rightarrow X$ be an expansive homeomorphism with specification over a compact metric space $X$, and let $\mu\in\M(f)$ be a measure of maximal entropy (that is, $h_\mu(f)=h(f)$).  Then, for each $q>1$, one has
\[\frac{h(f)}{\log \Lambda} \leq D^-_{\mu}(q), \]
where $\Lambda>1$ is the Lipschitz constant for $f$ (under the hyperbolic metric). 
\end{cor}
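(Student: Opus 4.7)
My plan is to adapt the argument behind Theorem~\ref{BKpunctually} (more precisely, its lower-bound half, which uses only the upper Lipschitz constant $\Lambda$) and to exploit the fact that the measure of maximal entropy of an expansive homeomorphism with specification is the Bowen-Margulis measure. The starting point is: since $(X,f)$ is expansive with specification, $\mu$ coincides with the Bowen-Margulis measure (see, e.g., Proposition~19.7 in \cite{Sigmundlibro}) and is therefore $f$-homogeneous in the sense of Definition~\ref{MU}. By Remark~\ref{bk}, Brin-Katok's formula is then satisfied punctually: $\lim_{\ve \downarrow 0}\liminf_{n\to\infty}(-\log\mu(B(x,n,\ve)))/n = h_\mu(f) = h(f)$ at every $x\in X$.

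Next, I would upgrade this pointwise Brin-Katok statement to a uniform upper estimate on Bowen balls. Applying punctual Brin-Katok at a fixed reference point $x_0\in X$, for $\eta>0$ I choose $\ve>0$ small and $N\in\N$ so that $\mu(B(x_0,n,\ve))\leq e^{-n(h(f)-\eta)}$ for all $n\geq N$. By $f$-homogeneity there exist $\delta=\delta(\ve)>0$ and $c=c(\ve)>0$ such that $\mu(B(y,n,\delta))\leq c\,\mu(B(x_0,n,\ve))$ for every $y\in X$; hence $\mu(B(y,n,\delta))\leq c\,e^{-n(h(f)-\eta)}$ uniformly in $y$. Since $f$ is Lipschitz with constant $\Lambda$ under the hyperbolic metric $d$ (Theorem~\ref{expanshyperbolic}), for small $r>0$ I choose $n$ to be the largest integer with $r\Lambda^n\leq\delta$; the inductive bound $d(f^i(y),f^i(z))\leq\Lambda^i d(y,z)$ gives the inclusion $B(y,r)\subset B(y,n,\delta)$. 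Combining with $n\geq\log(\delta/r)/\log\Lambda-1$, I obtain
\[
\mu(B(y,r))\ \leq\ c\,e^{-n(h(f)-\eta)}\ \leq\ C\,r^{(h(f)-\eta)/\log\Lambda}
\]
uniformly in $y\in X$ for all sufficiently small $r>0$, where $C>0$ depends only on $\ve$ and $\eta$.

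I then plug this uniform pointwise bound into the energy function defining $D_\mu^-(q)$. For $q>1$,
\[
I_\mu(q,r)\ =\ \int_X \mu(B(y,r))^{q-1}\,d\mu(y)\ \leq\ C^{q-1}\,r^{(q-1)(h(f)-\eta)/\log\Lambda}.
\]
Taking logarithms, dividing by $(q-1)\log r<0$ (which reverses the inequality) and letting $r\downarrow 0$ gives $D_\mu^-(q)\geq (h(f)-\eta)/\log\Lambda$; letting $\eta\downarrow 0$ yields the desired bound $h(f)/\log\Lambda\leq D_\mu^-(q)$.

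The main technical hurdle is precisely the uniform-in-$y$ upper estimate on $\mu(B(y,n,\delta))$: Brin-Katok alone provides only pointwise convergence, and a pointwise lower bound on $\underline{d}_\mu$ would, by Proposition~\ref{BGT1}, produce only an \emph{upper} bound on $D_\mu^-(q)$ (the wrong direction). It is $f$-homogeneity --- the feature that lets one transfer estimates across points with a constant independent of location --- which makes the upgrade to a uniform bound possible, and with it the proof goes through.
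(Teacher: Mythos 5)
Your argument is correct, but it takes a genuinely different route from the paper's. The paper's proof is two lines: it quotes Remark~9.A in \cite{TakensVerbitsky}, which gives $\overline{h}_\mu(f,x)=\underline{h}_\mu(f,x)=h_\mu(f)$ at \emph{every} $x\in X$ for \emph{any} measure of maximal entropy of an expansive homeomorphism with specification (punctual Brin--Katok), and then invokes Theorem~\ref{BKpunctually} --- really only its lower-bound half, which uses just the Lipschitz constant $\Lambda$ through Lemma~\ref{infsup}(i) and the compactness/covering argument of Theorem~\ref{torigual0}. You instead identify $\mu$ with the Bowen--Margulis measure and declare it $f$-homogeneous; note that this step needs Bowen's uniqueness theorem for expansive systems with specification together with the Gibbs-type bounds for that measure, facts which are true but which Proposition~19.7 in \cite{Sigmundlibro} (cited in the paper only for Axiom~A diffeomorphisms) does not by itself supply, so your citation should be strengthened. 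Granting homogeneity, you use it twice --- once via Remark~\ref{bk} to get punctual Brin--Katok at a reference point, and once to transport that decay to all centers $y$ --- and then integrate the resulting uniform bound $\mu(B(y,r))\le C\,r^{(h(f)-\eta)/\log\Lambda}$ directly in the energy function. This buys a self-contained, quantitative computation that bypasses Theorem~\ref{torigual0} entirely; the price is the heavier external input (uniqueness plus homogeneity), whereas the paper's route applies to any measure of maximal entropy, needs no uniqueness, and reuses machinery already proved. One correction to your closing remark: homogeneity is not what ``makes the upgrade to a uniform bound possible.'' Theorem~\ref{torigual0} shows that a pointwise bound $\underline{d}_\mu(x)\ge\alpha$ holding at every point of the compact support (not merely $\mu$-a.e., which is indeed all that Proposition~\ref{BGT1} could exploit, and in the wrong direction for $q>1$) already yields $D^-_\mu(q)\ge\alpha$, via the coincidence of the uniform and ordinary local dimensions plus a finite-covering argument; your uniform estimate is an alternative resolution of that difficulty, not the only one.
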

\begin{proof} 
  Given that $\{x\in X\mid \overline{h}_\mu(f,x)=\underline{h}_\mu(f,x)=h_\mu(f)\}=X$ for each $\mu\in\M(f)$ of maximal entropy (see Remark 9.A in~\cite{TakensVerbitsky}), the result follows from Theorem~\ref{BKpunctually}.
\end{proof}

\subsection{Organization}

The paper is organized as follows. In Section~\ref{boundedcorr}, we present the proofs of Theorems~\ref{torigual0} and ~\ref{BKpunctually}. Sections~\ref{AXIOM-A} and~\ref{zerocorrgeneric} 
are devoted, respectively, to the proofs of Corollary~\ref{quasiyoung}, Theorems~\ref{allzerodimensions} and~\ref{allzerodimensions1}. 
Finally, in Section~\ref{secentropy}, we show that, for some dynamical systems, the metric entropy of an invariant measure is typically zero, settling a conjecture posed by Sigmund in~\cite{Sigmund1974} for Lipschitz transformations which satisfy the specification property.





\section{Proofs of Theorems~\ref{torigual0} and~\ref{BKpunctually}}
\label{boundedcorr}

Before we present the proof of Theorem~\ref{torigual0}, some preparation is required; the strategy adopted here is inspired by \cite{Afraimovich,Saussol,Young}. Let, for each Borel probability measure $\mu$ and each $x\in X$,
\begin{eqnarray*}
\underline{d}_{\mu,i}(x):=\liminf_{\ve\to 0} \inf_{y\in \tilde{B}(x,\ve)}\frac{\log\mu(B(y,\ve))}{\log \ve}
\end{eqnarray*}
and 
\begin{eqnarray*}
\overline{d}_{\mu,s}(x):=\limsup_{\ve\to 0} \sup_{y\in \tilde{B}(x,\ve)}\frac{\log\mu(B(y,\ve))}{\log \ve}
\end{eqnarray*}  
be the so-called lower and upper uniform local dimensions of $\mu$ at $x$, where $\tilde{B}(x,\ve)= B(x,\ve)\cap \supp\mu$. It is straightforward to prove that the local uniform dimensions of a Borel probability measure coincide with its ordinary local dimensions (if $x\notin\supp\mu$, then $\overline{\underline{d}}_{\mu}(x):=\infty$ also coincide with the local uniform dimensions). 

\begin{proof01}
Since the arguments used in the proof of the first and the last inequalities are similar, we just present the proof that, for each $q>1$, $D^-_{\mu}(q)\geq \alpha$. The second and the fourth inequalities come, then, from Proposition~\ref{BGT1}.

Fix $q>1$, let $x\in\supp(\mu)$, and let $\eta>0$; then, there exists an $\ve(x)>0$ such that, for each $\ve\in(0,\ve(x))$ and each $y\in B(x,\ve)$,
\[\frac{\log\mu(B(y,\ve))}{\log \ve} \geq \inf_{y\in \tilde{B}(x,\ve)}\frac{\log\mu(B(y,\ve))}{\log \ve}\geq \alpha-\eta.\]
Thus, for each $x\in\supp(\mu)$ and each $\eta>0$, there exists an $\ve(x)>0$ such that, for each $\ve\in(0,\ve(x))$ and each $y\in B(x,\ve)$,
\begin{equation}\label{estimation}
  \mu(B(y,\ve))\leq \ve^{\alpha-\eta}. 
\end{equation}

Now,  since $\{B(x,\ve(x))\}_{x\in\supp(\mu)}$ is an open covering of the compact set $\supp(\mu)$, there exists a finite sub-family of $\{B(x,\ve(x))\}_{x\in \supp(\mu)}$ which also  covers $\supp(\mu)$. Let $\{B(x_i,\ve(x_i))\}_{i=1}^{k}$ be this sub-covering and let $\ve(k):=\min\{\ve(x_1), \ldots, \ve(x_k)\}$.
 
Consider  the following (finite) covering of $\supp(\mu)$ by balls of radius $\ve(k)$:
\[\supp(\mu) \subset \bigcup_{j=1}^{N}B(y_{j},\ve(k)),\] 
where $y_j\in  \overline{B}(x_l,\ve(x_l))$ for some $l\in\{1,\ldots, k\}$ (note that since, for each $l\in\{1,\ldots, k\}$, $\overline{B}(x_l,\ve(x_l))$ is compact, the open covering $\{B(y,\ve(k))\}_{\{y\in\overline{B}(x_l,\ve(x_l))\}}$ of $\overline{B}(x_l,\ve(x_l))$ admits a finite sub-covering). Now, let $\{ A_j\}_{j=1}^{M}$ be the disjoint covering of $\supp(\mu)$ obtained by removing the self-intersections of the elements of the previous covering; then, 
\begin{eqnarray}
\label{partition}
\supp(\mu)= \biguplus_{j=1}^{M} A_j\cap \supp(\mu). 
\end{eqnarray}
Fix $j\in\{1,\ldots,M\}$ and let $y\in A_j\cap \supp(\mu)$; there exists an $l\in\{1,\ldots,k\}$ such that $y\in B(x_l,\ve(x_l))\cap \supp(\mu)$. It follows from (\ref{estimation}) that, for each $0<\ve\leq \ve(k)\leq \ve(x_i)$, one has 
\begin{eqnarray*}
\mu(B(y,\ve))\leq \ve^{\alpha-\eta}.
\end{eqnarray*}
Therefore, 
\begin{eqnarray}
\label{clave3}
\nonumber\int_{A_j}\mu(B(y,\ve))^{q-1}d\mu(y)&=&\int_{A_j\cap \supp\mu}\mu(B(y,\ve))^{q-1}d\mu(y)\\
&\leq& \int_{A_j\cap \supp\mu}\ve^{(q-1)(\alpha-\eta)}d\mu(y)= \ve^{(q-1)(\alpha-\eta)}\,\mu(A_j).
\end{eqnarray}
Now, by (\ref{partition}) and (\ref{clave3}), one gets
\begin{eqnarray*}
\label{clave4}
\int_{\supp(\mu)}\mu(B(y,\ve))^{q-1}d\mu(y)
&=& \int_{\biguplus_{j=1}^{M} A_j\cap \supp\mu}\mu(B(y,\ve))^{q-1}d\mu(y)\\
&=& \sum_{j=1}^{M}\int_{A_j\cap \supp\mu}\mu(B(y,\ve))^{q-1}d\mu(y)\\
&\leq & \sum_{j=1}^{M} \ve^{(q-1)(\alpha-\eta)} \mu(A_j)\\ 
&= & \ve^{(q-1)(\alpha-\eta)}.
\end{eqnarray*}
Thus,
\[D^-_{\mu}(q)= \liminf_{\ve\to 0}\frac{\log\int_{\supp(\mu)}\mu(B(y,\ve))^{q-1}d\mu(y)}{(q-1)\log \ve}\geq \alpha -\eta.\]
The result now follows, since $\eta>0$ is arbitrary.
\end{proof01}


In order to prove Theorem~\ref{BKpunctually}, we need to prove some inequalities relating the local uniform dimensions and the local upper and lower entropies of invariant measures. This result is also used in the discussion involving the typical value of the metric entropy of an invariant measure for some particular dynamical systems (see Section~\ref{secentropy}).

\begin{lema}
\label{infsup}
Let $(X,f)$ be a topological dynamical system such that $X$ is a Polish metric space, and let $\mu\in\M(f)$. 
\begin{itemize}
\item[\emph{i)}]
If $f$ is a continuous function for which there exist  constants $\Lambda>1$ and $\delta>0$  such that, for each $x,y\in X$ so that $d(x,y)<\delta$, $d(f(x),f(y))\leq \Lambda\,d(x,y)$, then for each $x\in X$, 
\begin{eqnarray}\label{BKI}
\underline{d}_{\mu}(x)\geq \frac{\underline{h}_{\mu}(f,x)}{\log\Lambda}.
\end{eqnarray}
 Moreover, if $\mu\in\M_e(f)$, it follows that
\begin{eqnarray}\label{DH}
\dim_H^-(\mu)\geq \frac{h_{\mu}(f)}{\log\Lambda}.
\end{eqnarray}
\item[\emph{ii)}] If $f$ is a continuous function for which if there exist  constants $\lambda>1$ and $\delta>0$  such that, for each $x,y\in X$ so that $d(x,y)<\delta$,  $\lambda\,d(x,y)\leq d(f(x),f(y))$, then for each $x\in X$,
\begin{eqnarray}\label{BKS}
\overline{d}_{\mu}(x)\leq \frac{\overline{h}_{\mu}(f,x)}{\log\lambda}.
\end{eqnarray}  
Moreover, if $X$ is compact and $\mu\in\M_e(f)$, it follows that 
\begin{eqnarray}\label{DP}
\dim_P^+(\mu)\leq \frac{h_{\mu}(f)}{\log\lambda}.
\end{eqnarray}  
Here,  $\overline{\underline{h}}_{\mu}(f,x):=\lim_{\ve\to 0} \limsup(\inf)_{n\to \infty} \displaystyle\frac{-\log\mu(B(x,n,\ve))}{n}$ is the upper (lower) local entropy of $(f,\mu)$ at $x\in X$. 
\end{itemize} 
\end{lema}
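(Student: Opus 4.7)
The plan is to reduce both inequalities to explicit set-theoretic containments between metric balls and Bowen balls, obtained by iterating the local Lipschitz/expansion hypothesis. The $\mathrm{essinf}/\mathrm{esssup}$ statements then follow from Brin-Katok's Theorem together with the Hausdorff/packing identifications in Proposition~\ref{BGT1}.

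For part (i), fix $\ve \in (0,\delta)$. I would first prove by induction on $i$ that whenever $d(x,y) < \ve \Lambda^{-n}$, one has $d(f^{i}(x), f^{i}(y)) \le \Lambda^{i} d(x,y) < \ve < \delta$ for every $i = 0, 1, \ldots, n$; the point is that at each step the previous iterate is still within $\delta$, so the one-step Lipschitz bound applies. This yields the containment $B(x, \ve \Lambda^{-n}) \subset B(x, n, \ve)$ and hence $\mu(B(x,\ve\Lambda^{-n})) \le \mu(B(x,n,\ve))$. For an arbitrary small $r$, pick $n = n(r)$ with $\ve \Lambda^{-(n+1)} \le r < \ve\Lambda^{-n}$, so that $B(x,r) \subset B(x,n,\ve)$ and
\[
\frac{\log \mu(B(x,r))}{\log r} \;\ge\; \frac{-\log \mu(B(x,n,\ve))}{(n+1)\log \Lambda + |\log \ve|} \;=\; \frac{1}{\log\Lambda}\cdot\frac{-\log\mu(B(x,n,\ve))/n}{1+o(1)},
\]
after dividing numerator and denominator by $n$. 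Taking $\liminf$ as $r\downarrow 0$ (equivalently $n\to\infty$) then sending $\ve\downarrow 0$ gives \eqref{BKI}. For the ergodic consequence \eqref{DH}, Brin-Katok's Theorem gives $\underline{h}_\mu(f,x) = h_\mu(f)$ $\mu$-a.e., and Proposition~\ref{BGT1} identifies $\dim_H^-(\mu)$ with $\mu$-$\essinf \underline{d}_\mu(x)$.

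For part (ii), the argument is dual but with the direction of the iteration reversed. Taking $\ve < \delta$ and $y \in B(x,n,\ve)$, all the iterates $f^{i}(x), f^{i}(y)$ stay within $\ve < \delta$, so we may apply the one-step expansion $\lambda\, d(\cdot,\cdot) \le d(f(\cdot),f(\cdot))$ at every step and conclude $\lambda^{n} d(x,y) \le d(f^{n}(x),f^{n}(y)) < \ve$; hence $B(x,n,\ve) \subset B(x, \ve \lambda^{-n})$. Given small $r$, choose $n$ with $\ve\lambda^{-(n+1)} < r \le \ve\lambda^{-n}$, so that $B(x,r) \supset B(x, \ve\lambda^{-(n+1)}) \supset B(x, n+1, \ve)$, and therefore
\[
\frac{\log \mu(B(x,r))}{\log r} \;\le\; \frac{-\log \mu(B(x,n+1,\ve))}{n\log\lambda + |\log\ve|} \;=\; \frac{1}{\log\lambda}\cdot\frac{-\log\mu(B(x,n+1,\ve))/(n+1)}{1+o(1)}.
\]
Taking $\limsup$ as $r\downarrow 0$ and then $\ve\downarrow 0$ yields \eqref{BKS}. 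For \eqref{DP}, compactness of $X$ together with continuity of $f$ makes Brin-Katok applicable and gives $\overline{h}_\mu(f,x) = h_\mu(f)$ $\mu$-a.e., while Proposition~\ref{BGT1} identifies $\dim_P^+(\mu)$ with $\mu$-$\esssup \overline{d}_\mu(x)$.

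The only real obstacle is bookkeeping the hypothesis ``valid for $d(x,y)<\delta$'': one must ensure the chosen $\ve$ satisfies $\ve < \delta$ so that the iteration of the one-step bound is legitimate throughout the orbit segment of length $n$. This is why it is crucial, in part (ii), to verify that the Bowen ball condition $d(f^{i}(x),f^{i}(y))<\ve$ for all $i\le n$ keeps every pair in the region where the expansion hypothesis applies; in part (i) it is equally crucial that the iterative bound $d(f^{i}(x),f^{i}(y)) \le \Lambda^{i} d(x,y) < \ve$ persists through the induction. Once these containments are in place, the rest is the routine comparison of scales $r \leftrightarrow \ve\Lambda^{\mp n}$.
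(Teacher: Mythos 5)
Your containments and scale comparisons are exactly the paper's argument: iterating the one-step Lipschitz bound gives $B(x,\ve\Lambda^{-n})\subset B(x,n,\ve)$ (the paper's Claim~1), the expansion hypothesis gives $B(x,n,\ve)\subset B(x,\ve\lambda^{-n})$ (Claim~2), and the comparison of $\log\mu(B(x,r))/\log r$ along the geometric scales $r\leftrightarrow\ve\Lambda^{\mp n}$, followed by $\ve\downarrow 0$, yields \eqref{BKI} and \eqref{BKS} correctly. Your treatment of \eqref{DP}, where you explicitly use compactness to invoke Brin--Katok, also coincides with the paper's.

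The gap is in your derivation of \eqref{DH}. In part (i) the space $X$ is only assumed to be Polish --- note the asymmetry built into the statement: compactness is hypothesized for \eqref{DP} but deliberately not for \eqref{DH} --- whereas Brin--Katok's theorem is a statement about continuous maps of \emph{compact} metric spaces. You therefore cannot assert $\underline{h}_\mu(f,x)=h_\mu(f)$ $\mu$-a.e.\ in this generality; the Brin--Katok equality is exactly what can break down in the non-compact setting. The paper avoids this by citing results valid for Polish spaces (Lemma~2.8 and Theorem~2.9 of the reference \emph{Riquelme}): for ergodic $\mu$ the lower local entropy $\underline{h}_\mu(f,x)$ is $\mu$-a.e.\ equal to its essential infimum and satisfies the one-sided bound $\underline{h}_\mu(f,x)\ge h_\mu(f)$ $\mu$-a.e. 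This one-sided inequality is all that is needed: combined with \eqref{BKI} and the identification $\dim_H^-(\mu)=\mu\textrm{-}\essinf\,\underline{d}_\mu(x)$ from Proposition~\ref{BGT1}, it gives \eqref{DH}. To repair your proof, replace the appeal to Brin--Katok in part (i) by such a non-compact one-sided entropy estimate (or else you are implicitly strengthening the hypothesis of part (i) to compact $X$, which is not what the lemma claims).
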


\begin{proof}
\noindent {\bf\mbox{i)}} Claim 1. One has, for each $x\in X$, each $n\in\mathbb{N}$ and each $0<\ve\le\min\{1/2, \delta/2\}$, $B(x,\ve \Lambda^{-n})\subset B(x,n,\ve)$, where $B(x,n,\ve):=\{ y\in X\mid d(f^i(x),f^i(y))<\ve,~ \forall~ i=0,\ldots,n\}$ is the Bowen ball of size $n$ and radius $\ve$, centered at $x$. Namely, fix $x\in X$, $n\in\mathbb{N}$ and $0<\ve\le\min\{1/2, \delta/2\}$, and let $y\in B(x,\ve \Lambda^{-n})$; then, since $\ve \Lambda^{-n}<\delta$, one has, for each $i=0,\ldots,n$, $d(f^i(x),f^i(y))<\ve$, proving the claim.

Now, it follows from Claim 1 that, for each $x\in X$ and each $0<\ve\le\min\{1/2, \delta/2\}$, 
\begin{eqnarray*}
\underline{d}_{\mu}(x)  &=&   \liminf_{n\to \infty} \frac{\log\mu(B(x,\ve\Lambda^{-n}))}{\log\ve \Lambda^{-n}}  \\
&\geq & \liminf_{n\to \infty} \frac{\log\mu(B(x,n,\ve))}{-n} ~\frac{1}{\frac{-\log\ve}{n}+\log \Lambda}\\ 
&=& \liminf_{n\to \infty} \frac{\log\mu(B(x,n,\ve))}{-n} \frac{1}{\log\Lambda}.
\end{eqnarray*}
Thus, taking $\ve\to 0$ in both sides of the inequalities above, the result follows.

Now, if $\mu\in\M_e(f)$, it follows from Lemma~2.8 in~\cite{Riquelme} that $\underline{h}_{\mu}(f,x)=\mu\textrm{-}\essinf h_\mu(T,y)$ is valid for $\mu$-a.e. $x$, and then, by Theorem~2.9 in~\cite{Riquelme}, that $\underline{h}_{\mu}(f,x)\ge h_\mu(T)$ is also valid for $\mu$-a.e. $x$. Relation~\eqref{DH} is now a consequence of relation~\eqref{BKI}  and Definition~\ref{HPdim}.

\vspace{0.5cm} \noindent {\bf\mbox{ii)}} Claim 2. One has, for each $x\in X$, each $n\in\mathbb{N}$ and each $0<\ve\le\delta$, $B(x,n,\ve)\subset B(x,\ve \lambda^{-n})$. Namely, fix $x\in X$, $n\geq1$ and $0<\ve\le\delta$, and let $y\in B(x,n,\ve)$ so that, for each $j=0,\ldots,n$, $d(f^{j}(x),f^j(y))<\ve\le\delta$; it follows from the hypothesis that $\lambda^n\,d(x,y)\leq d(f^n(x),f^n(y))<\ve$, and therefore that $d(x,y)<\ve \lambda^{-n}.$

Now, it follows from Claim 2 that, for each $x\in X$ and each $0<\ve\le\delta$, 
\begin{eqnarray*}
\limsup_{n\to \infty} \frac{\log\mu(B(x,n,\ve))}{-n} \frac{1}{\log\lambda} 
&= &  \limsup_{n\to \infty} \frac{\log\mu(B(x,n,\ve))}{-n} ~\frac{1}{\frac{-\log\ve}{n}+\log \lambda}\\
&\geq &  \limsup_{n\to \infty} \frac{\log\mu(B(x, \ve \lambda^{-n}))}{\log\ve\lambda^{-n}}\\
&=& \overline{d}_{\mu}(x).
\end{eqnarray*}
Thus, taking $\ve\to 0$ in both side of the inequalities above, the result follows.

Now, if $\mu\in\M_e(f)$, it follows from Brin-Katok's Theorem that, for $\mu$-a.e. $x\in X$, $\overline{h}_{\mu}(f,x)=\underline{h}_\mu(f,x)=h_\mu(f)$. Relation~\eqref{DP} is now a consequence of relation~\eqref{BKS} and Definition~\ref{HPdim}. 
\end{proof}


\begin{rek} It follows from Theorem~2.10 in~\cite{Riquelme} that if $X$ is a complete (non-compact) Riemannian manifold and $\mu\in\M_e(f)$, then~\eqref{DP} is also valid. 
\end{rek}

\begin{proof03}
It follows from Lemma~\ref{infsup} and the fact that Brin-Katok's Theorem is satisfied punctually that, for each $x\in X$, 
\begin{equation}\label{desentr}
  \frac{h_{\mu}(f)}{\log\Lambda}\leq \underline{d}_{\mu}(x)\le \overline{d}_{\mu}(x)\leq\frac{h_\mu(f)}{\log\lambda}.
    \end{equation}
The result is now a consequence of Theorem~\ref{torigual0}.
\end{proof03}

\section{Proof of Corollary~\ref{quasiyoung}}
\label{AXIOM-A}
\begin{proof02}~

 \textit{Claim 1.}  For each $x\in X$, one has $\underline{d}_{\mu}(x)\geq h_{\mu}(T)\left[ \frac{1}{\lambda_1}-\frac{1}{\lambda_2}\right]$.

 \noindent We follow the proof of part 1 of Lemma 3.2 in \cite{Young}.  Namely, let 
\begin{eqnarray*}
\Lambda =\{x\in M\mid x && \hspace{-0.6cm}\mbox{ is  regular in the sense of Oseledec-Pesin} \\
 &&\mbox{ and } \lim_{\ve \to 0}\liminf_{n_1,\, n_2 \to \infty} \frac{-\log \mu(B(x,n_1,n_2,\ve))}{n_1+ n_2}= h_{\mu}(T)\},
\end{eqnarray*}
where $B(x,n_1,n_2,\ve):=\{y\in X\mid d(T^jx,T^jy)<\ve,\,j=-n_2,\ldots,n_2\}$ is the bilateral Bowen ball of size $n_1+n_2+1$ and radius $\ve$.
  
   Since $\mu$ is an $f$-homogeneus measure and $T$ is a uniform hyperbolic transformation (note that the discussion presented in Remark~\ref{bk} can be adapted to bilateral Bowen balls), it follows that $\Lambda=M$ (see \cite{Sigmund1970,Sigmund1972}). Let $\chi_i=e^{\lambda_i}$. For each $x\in M$ and each $\ve>0$, it is straightforward to show (as in~\cite{Young}) that 
\begin{eqnarray*}
\underline{d}_{\mu}(x)\geq (h_{\mu}(T)-\ve)\left[ \frac{1}{\log\frac{\chi_1+2\ve}{1-\ve}}+\frac{1}{\log\frac{\chi_2^{-1}+2\ve}{1-\ve}}\right];
\end{eqnarray*}
indeed, it is possible to show that, for each $\rho>0$ and each $y\in B(x,K(x)^{-1}\rho/2)$, one has
\begin{eqnarray*}
\label{subset1}
B(y, K(x)^{-1}\rho/2)\subset B(x, K(x)^{-1}\rho)\subset B(x,n_1(\rho),n_2(\rho),\rho),
\end{eqnarray*}
where $K(x):M\to \R$ is a function that relates the distance in the $x$-chart and the Riemannian metric on $M$ by the formula $\|\cdot - \cdot\|_x\leq K(x) d(\cdot,\cdot)$. Since $\ve>0$ is arbitrary, the claim follows. 

 \textit{Claim 2.}  For each $x\in X$, one has $\overline{d}_{\mu}(x)\leq h_{\mu}(T)\left[ \frac{1}{\lambda_1}-\frac{1}{\lambda_2}\right]$.

 \noindent We follow the proof of part 2 of Lemma 3.2 in \cite{Young}. Namely, since $X$ is a uniformly hyperbolic set, one may define $\phi:X\rightarrow\mathbb{R}$ by the law $$\phi(x)=\phi=A_1K_1\min\{(\chi_1+2\ve)^{-1}),(\chi_2^{-1}+2\ve)^{-1}\},$$ where $A_1:=\inf_{x\in X}A(x)>0$ and $K_1:=\sup_{x\in X} K(x)<\infty$ (see the proof of Lemma 3.2 in~\cite{Young} for details).

 Now, since $\mu$ is $f$-homogeneous, it follows from Man\~e's estimate that, for each $x\in X$,
 \[\limsup_{n_1,n_2\to\infty}-\frac{1}{n_1+n_2}\log(\mu(B(x,n_1,n_2,\phi)))\le h_\mu(T).\]
 
 The rest of the proof follows the same steps presented in the proof of Lemma 3.2 in~\cite{Young}, taking into account that $\Lambda_1=X$. 

 \
 
The result follows now from Claims 1, 2, Theorem~\ref{torigual0} and Proposition~\ref{BGT1} (for the case $q=1$).
\end{proof02}

\begin{rek}
As in Theorem~4.4 in \cite{Young}, one has, for each $q\in\R$, that
  \[D^{\pm}_\mu(q)=\underline{C}(\mu)=\overline{C}(\mu)=\underline{C}_{L}(\mu)=\overline{C}_{L}(\mu)=\underline{R}(\mu)=\overline{R}(\mu)= h_{\mu}(T)\left[\frac{1}{\lambda_{1}}-\frac{1}{\lambda_{2}}\right],\]
where $\underline{C}(\mu), \overline{C}(\mu), \underline{C}_{L}(\mu),\overline{C}_{L}(\mu)$ are the capacities and $\underline{R}(\mu), \overline{R}(\mu)$ are the upper and lower Renyi dimensions of $\mu$. 
\end{rek}

\begin{rek}\label{RAA}
The Bowen-Margulis measure is an example of measure that does not belong to set $CD_0$ in Theorem~\ref{zerocorrandhausdorff}. In fact, one has $D_\mu(q)=\dim_H(\mu)=\dim_H(X)$ (which is equal 2 when $f$ is Anosov).
\end{rek}

\section{Proofs of Theorems~\ref{allzerodimensions} and~\ref{allzerodimensions1}}
\label{zerocorrgeneric}

 Let $(X,d)$ be a compact metric space, and let $f:X\to X$ be a homeomorphism. For each $n\in \N$, one defines a new metric $d_n$ on $X$ by the law
\[d_n(x,y)=\max\{d(f^k(x), f^k(y)): k=0, \cdots, n-1\}.\]
 Note that, for each $\ve>0$, the open ball of radius $\ve$ centered at $x\in X$ with respect to $d_n$ coincides with the Bowen dynamical ball of size $n$ and radius $\ve>0$, centered at $x$:
\[B(x,n,\ve)=\{y\in X: d_n(x,y)<\ve\}.\]

\begin{propo}
The metrics $d_n$ and $d$ induce the same topology on $X$.
\end{propo}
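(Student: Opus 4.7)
The plan is to verify that the $d$-topology and the $d_n$-topology on $X$ have the same open sets by showing mutual refinement at the level of basic neighborhoods: for each $x \in X$ and each $\varepsilon > 0$, every $d$-ball centered at $x$ contains a $d_n$-ball centered at $x$, and vice versa. Since both metrics are Hausdorff, this pointwise comparison of basic open neighborhoods is enough.

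One inclusion is immediate from the definition. Taking $k = 0$ in the max that defines $d_n$ gives
\[
d(x,y) \leq d_n(x,y)
\]
for all $x,y \in X$, hence $B_{d_n}(x,\varepsilon) \subset B_d(x,\varepsilon)$. This shows the $d_n$-topology is at least as fine as the $d$-topology.

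For the reverse inclusion, fix $x \in X$ and $\varepsilon > 0$. Since $f$ is continuous, each iterate $f^k$ is continuous at $x$ for $k = 1, \ldots, n-1$, so there exist $\delta_k > 0$ such that
\[
d(x,y) < \delta_k \quad \Longrightarrow \quad d(f^k(x), f^k(y)) < \varepsilon.
\]
Setting $\delta := \min\{\varepsilon, \delta_1, \ldots, \delta_{n-1}\} > 0$, any $y \in B_d(x,\delta)$ satisfies $d(f^k(x), f^k(y)) < \varepsilon$ for every $k = 0,1,\ldots, n-1$, hence $d_n(x,y) < \varepsilon$. This yields $B_d(x,\delta) \subset B_{d_n}(x,\varepsilon)$, completing the proof.

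There is no real obstacle here; the statement is essentially a routine observation. Note that we use only continuity of finitely many maps $f^0, f^1, \ldots, f^{n-1}$, so neither compactness of $X$ nor invertibility of $f$ is required for this particular assertion — both will be used elsewhere in the section.
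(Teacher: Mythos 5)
Your proof is correct and is essentially the paper's own argument spelled out: the paper disposes of this proposition with the one-line remark that it is ``a direct consequence of the fact that $f$ is a homeomorphism,'' and your two inclusions ($d\le d_n$ for one direction, continuity of $f,\dots,f^{n-1}$ for the other) are precisely the content of that remark. Your closing observation that only continuity of finitely many iterates is needed---neither invertibility nor compactness---is a valid (if minor) sharpening of the paper's phrasing.
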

\begin{proof} This is a direct consequence of the fact that $f$ is a homeomorphism.
\end{proof}

Thus, for each $x\in X$, each $n\in\mathbb{N}$ and each $\ve>0$, $\overline{B}(x,n,\ve)=\{y\in X \mid d_n(x,y)\leq \ve\}$ is a closet set and $B(x,n,\ve)=\{y\in X \mid d_n(x,y)<\ve\}$ is an open set (both with respect to the topology induced by $d$).

Let $n\in \N$ and $\ve>0$. A subset $F$ of $X$ is said to be an $(n,\ve)$-generating set if, for each $x\in X$, there exists $y\in F$ such that $d_n(x,y)<\ve$.

Let $R(n,\ve)$ be the smallest cardinality of an $(n,\ve)$-generating set for $X$ with respect to $f$. Then, the following limit exists, and it is called the \emph{topological entropy} of $f$ (see~\cite{Walters}): 
\begin{eqnarray}
\label{topentropy1}
h(f) := \lim_{\ve\to 0} \limsup_{n\to \infty} \frac{1}{n} \log R(n,\ve)
=\lim_{\ve\to 0} \liminf_{n\to \infty} \frac{1}{n} \log R(n,\ve).
\end{eqnarray}

\begin{proof3}
It follows from Theorem \ref{expanshyperbolic} that there exist $k>1$ and $\ve>0$ such that, for each $x\in X$, each $n\in\mathbb{N}$ and each $0<r<\ve/k$, one has  $B(x,n,r)\subset B(x, k^{-n}r)$. Thus, $\mu(B(x,n,r))^{q-1}\geq \mu(B(x, k^{-n} r))^{q-1}$, and (taking $r$ sufficiently small so that $k^{-n}r<1$)
\begin{eqnarray}
\label{exp01}
\frac{\log \int_{\supp\mu} \mu(B(x, k^{-n} r))^{q-1}d\mu(x)}{(q-1) \log k^{-n} r}\leq \frac{\log \int_{\supp\mu} \mu(B(x,n,r))^{q-1}d\mu(x)}{(q-1) \log k^{-n} r}.
\end{eqnarray}

Let $E=\{x_i\}$ be an arbitrary $(2n+1,r/2)$-generating set (which, in particular, implies that $X\subset \bigcup_{x_i\in E} B(x_i,n,r/2)$). Since $X$ is compact, one may take $E$ as a finite subset of $X$. Let also $\tilde{F}=\{x_j\}$ be a subset of $E$ such that $\{B(x_j,n,r/2)\}_{\tilde{F}}$ is a covering of $\supp(\mu)$. Then, one has
\begin{eqnarray}
\label{exp02}
\int_{\supp(\mu)} \mu(B(x,n,r))^{q-1}d\mu(x) 
&\leq  &\sum_{x_j\in \tilde{F}} \int_{B(x_j,n,r/2)} \mu(B(x,n,r))^{q-1}d\mu(x) \nonumber\\
&\leq & \sum_{x_j\in \tilde{F}} \mu(B(x_j,n,r/2))^q \nonumber \\
&\leq &\sum_{x_i\in E} \mu(B(x_i,n,r/2))^q,
\end{eqnarray}
where we have used the fact that, for each $x\in B(x_j,n,r/2)$, $B(x_j,n,r/2)\subset B(x,n,r)$.

Now, by (\ref{exp01}) and (\ref{exp02}), one has 
\begin{eqnarray*}
\frac{\log \int_{\supp\mu} \mu(B(x, k^{-n} r))^{q-1}d\mu(x)}{(q-1) \log k^{-n} r} 
&\leq &\frac{\log \int_{\supp\mu} \mu(B(x,n,r))^{q-1}d\mu(x)}{(q-1) \log k^{-n} r}\\
&\leq & \frac{\log \sum_{x_i\in E}\mu(B(x_i,n,r/2))^q}{(q-1) \log k^{-n} r}.
\end{eqnarray*}
Thus, for $q=0$,   
\begin{eqnarray}
\label{eqq1}
\frac{\log \int_{\supp\mu} \mu(B(x, k^{-n} r))^{-1}d\mu(x)}{- \log k^{-n} r} \leq  \frac{\log R(n,r/2)}{ -\log k^{-n} r}=\frac{\log R(n,r/2)}{ n(\log k- \frac{\log r}{n})}.
\end{eqnarray}

Given that $D^{+}_{\mu}(q)$ is a decreasing function of $q$ (see \cite{Cutler1995} and \cite{Barbaroux}), one has $D^{+}_{\mu}(q) \leq D^{+}_{\mu}(0)$. Furthermore, since the function $\varphi:(0,\infty)\rightarrow\mathbb{R}$, $\varphi(\ve)=\int_{\supp\mu} \mu(B(x,\ve))^{-1}d\mu(x)$ is decreasing, it follows from Lemma~6.2 in \cite{Fathi1988} that
\begin{eqnarray}
\label{eqq2}
D^{+}_{\mu}(0)=\limsup_{n\to\infty}\frac{\log \int_{\supp\mu} \mu(B(x, k^{-n} r))^{-1}d\mu(x)}{- \log k^{-n} r}.
\end{eqnarray}   
Thus, it follows from 
(\ref{eqq1}) and (\ref{eqq2}) that
\begin{eqnarray*}
D^{+}_{\mu}(q) \leq D^{+}_{\mu}(0)=\limsup_{n\to\infty}\frac{\log \int_{\supp\mu} \mu(B(x, k^{-n} r))^{-1}d\mu(x)}{- \log k^{-n} r}
&\leq &  \limsup_{n\to\infty} \frac{\log R(n,r/2)}{ 2n} \frac{2}{\log k}.
\end{eqnarray*}
Therefore, taking $r\to 0$, the result follows from (\ref{topentropy1}).
\end{proof3}

\begin{cor}\label{zeroph}
  Let $(X,d,f)$ be as in the statement of Theorem~\ref{allzerodimensions}. If $h(f)=0$, then for each $\mu\in\M(f)$ and each $s\ge 0$, one has $D^{\pm}_{\mu}(s)=0$.  
\end{cor}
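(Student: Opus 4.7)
The plan is to read off the corollary as an immediate specialization of Theorems~\ref{allzerodimensions} and~\ref{allzerodimensions1}, combined with the variational principle. First I would observe that, since $f$ is continuous on the compact metric space $X$, the variational principle forces $0\le h_\mu(f)\le h(f)=0$, hence $h_\mu(f)=0$ for every $\mu\in\M(f)$.

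Next, I would split the range $s\ge 0$ into two pieces matching the hypotheses of the two theorems. For $s\in[0,1)$, Theorem~\ref{allzerodimensions} gives $D^+_\mu(s)\le 2h(f)/\log k=0$, while for $s\ge 1$, Theorem~\ref{allzerodimensions1} gives $D^+_\mu(s)\le h_\mu(f)\log k=0$. In either regime we obtain $D^+_\mu(s)\le 0$, and the trivial inequality $D^-_\mu(s)\le D^+_\mu(s)$ extends the same upper bound to the lower generalized dimension.

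To turn these bounds into equalities I would invoke the elementary non-negativity $D^\pm_\mu(s)\ge 0$, which is immediate from Definition~\ref{gfd}: since $\mu(B(x,\ve))\in(0,1]$, one has $\log I_\mu(s,\ve)\le 0$ when $s>1$ and $\log I_\mu(s,\ve)\ge 0$ when $s<1$, so in both cases it shares sign with $(s-1)\log\ve$ for small $\ve$; for $s=1$, both the numerator $\int \log \mu(B(x,\ve))\,d\mu(x)$ and the denominator $\log \ve$ are non-positive. Combining non-negativity with the zero upper bound yields $D^\pm_\mu(s)=0$ for every $s\ge 0$ and every $\mu\in\M(f)$.

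I do not anticipate a substantive obstacle: the corollary is essentially a one-line specialization of the two preceding theorems, since both of their right-hand sides collapse to zero under the hypothesis $h(f)=0$. The only mildly subtle point is ensuring that the value $s=1$, which lies in neither Theorem~\ref{allzerodimensions} nor the strict interior of the $q\ge 1$ range of Theorem~\ref{allzerodimensions1}, is still covered; this is handled by Theorem~\ref{allzerodimensions1} at its boundary $q=1$, as explicitly stated there.
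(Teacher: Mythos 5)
Your proof is correct and is exactly the intended derivation: the paper states Corollary~\ref{zeroph} without proof, as an immediate specialization of Theorems~\ref{allzerodimensions} and~\ref{allzerodimensions1}, and your splitting of $s\ge 0$ at $s=1$, the variational principle giving $h_\mu(f)=0$, the sign analysis showing $D^{\pm}_{\mu}(s)\ge 0$ (including the case $s=1$), and the inequality $D^-_\mu(s)\le D^+_\mu(s)$ supply precisely the routine details left to the reader. A marginally leaner route for the range $s\ge 1$ --- suggested by the corollary's placement immediately after the proof of Theorem~\ref{allzerodimensions} and by the fact that it cites only that theorem --- is to invoke the monotonicity $D^{\pm}_{\mu}(q)\leq D^{\pm}_{\mu}(1)\leq D^{\pm}_{\mu}(s)$ from Proposition~\ref{BGT1} (with $q\ge 1$ and $s<1$) rather than Theorem~\ref{allzerodimensions1} and the variational principle, but this is a cosmetic difference, not a substantive one.
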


\begin{rek} It follows from Corollary~5.5 in~\cite{Fathi1989} that if a compact metric space admits an expansive homeomorphism whose topological entropy is zero, then its topological dimension is zero. See Section~3 in~\cite{Fathi1989} for examples of systems with zero topological entropy.
  \end{rek}

\begin{proof4}
It suffices, from Proposition~\ref{BGT1}, to prove the result for~$q=1$. It follows from Theorem \ref{expanshyperbolic} that there exist a hyperbolic metric $d$ which induces an equivalent topology on $X$, and numbers $k>1$, $\ve>0$ such that $f$ is expansive under this metric and, for each $0<r<\ve/k$ and each $x\in X$,  $B(x,n,r)\subset B(x, k^{-n}r)$. Thus, 
\begin{eqnarray}
\label{corrineq}
\frac{\int \log\mu(B(x, k^{-n} r))d\mu(x)}{\log k^{-n} r}\leq \frac{\int \log\mu(B(x,n,r))d\mu(x)}{\log k^{-n} r}.
\end{eqnarray}
\emph{Claim.} \[\limsup_{n\to\infty}\frac{ \int\log\mu(B(x,n,r))d\mu(x)}{\log k^{-n} r}\leq h_{\mu}(f) \log k.\]
Following the proof of Brin-Katok's Theorem, fix $r>0$ and consider a finite measurable partition $\xi$ such that $\diam \xi=\max_{C\in \xi} \diam(C)<r$. Let $\xi(x)$ be the element of $\xi$ such that $x\in\xi(x)$, and let $C_{n}^{\xi}(x)$ be the element of the  partition $\xi_n=\bigvee_{i=-n}^{n} f^{-i}\xi$ such that $x\in C_{n}^{\xi}(x)$. Given that $\xi(x)\subset B(x,r)$, one has 
\[C_{n}^{\xi}(x)=\bigcap_{i=-n}^{n}f^{-i}(\xi(f^ix))\subset \bigcap _{i=-n}^{n}f^{-i}(B(f^ix,r))=B(x,n,r),\]
from which follows that
\[\frac{ \int \log\mu(B(x,n,r))d\mu(x)}{-n}\leq \frac{ \int\log \mu(C_{n}^{\xi}(x))d\mu(x)}{-n}=\frac{H(\xi_n)}{n},\]
where $H(\xi_n)=-\sum_{C_{n}^{\xi}(x)\in \xi_n} \mu(C_{n}^{\xi}(x)) \log \mu(C_{n}^{\xi}(x))=\int -\log\mu(C_{n}^{\xi}(x))d\mu(x)$. Thus,
\begin{eqnarray*}
\limsup_{n\to\infty}\frac{ \int \log\mu(B(x,n,r))d\mu(x)}{-n}\leq \limsup_{n\to\infty}\frac{H(\xi_n)}{n}= H(f,\xi) \leq h_{\mu}(f),
\end{eqnarray*}
proving the claim.

Now, since for each $r>0$, each $k>1$ and each $n\in\N$, $\int \log\mu(B(x,k^{-n}r))d\mu(x)$ is finite (by~\eqref{corrineq} and Lemma~2.12 in~\cite{Verbitskiy}), it follows from an adaptation of Lemma~A.6 in~\cite{Moritz} that 
\begin{eqnarray}
\label{eq002}
\limsup_{r\to 0}\frac{\int\log\mu(B(x,r))d\mu(x)}{\log r}=\limsup_{n\to\infty}\frac{ \int \log\mu(B(x,k^{-n}r))d\mu(x)}{\log k^{-n}r}.
\end{eqnarray}

One concludes the proof of the proposition combining relations (\ref{corrineq}) and~\eqref{eq002} with Claim.
\end{proof4}

\begin{rek}
\label{RAA}
\begin{enumerate}
\item One can apply simultaneously Corollary~\ref{entropymax} and Theorem~\ref{allzerodimensions1} for measures of maximal entropy as follows: first, since $f$ is an expansive homeomorphism, it follows from Theorem~\ref{expanshyperbolic} that there exists an hyperbolic metric $d$ so that $f$ is Lipschitz; let $L>1$ be its Lipschitz constant and let $q>1$. One has, from Corollary~\ref{entropymax}, that $\frac{h(f)}{\log  L}\leq D^+_{\mu}(q)$,  and then, from Theorem~\ref{allzerodimensions1}, that $D^{+}_{\mu}(q)\leq h(f)\log k$.  Thus, $\frac{h(f)}{\log  L}\leq D^+_{\mu}(q)\leq h(f)\log k$.
  \item If $f$ is also expanding, with expanding constant $\lambda>1$, it follows from Theorem~\ref{BKpunctually} that for each $s<1$, $D^+_{\mu}(s)\leq \frac{h(f)}{\log \lambda}$. Now, it follows from Theorem~\ref{allzerodimensions} that  $D^{+}_{\mu}(s)\leq \frac{2h(f)}{\log k}$ (taking into account the hyperbolic metric $d$).  This shows the similarity of both results and a critical dependence on the metric.
\end{enumerate}
\end{rek}




\section{Generic sets of invariant measures with zero entropy}
\label{secentropy}

One can combine Lemma~\ref{infsup} with some results presented in~\cite{AS} in order to show that, in some situations, the set of invariant measures whose metric entropy is zero is residual.

In what follows, denote by $\M^{co}(f)$ the set of $f$-invariant periodic measures, that is, the set of measures of the form $\mu_x(\cdot):=\frac{1}{k_x}\sum_{i=0}
^{k-1}\delta_{f^i(x)}(\cdot)$, where $x\in X$ is an $f$-periodic point of period $k_x$, and $\delta_x(A)=1$ if $x\in A$ and zero otherwise.

\begin{teo}\label{GENENT}
Let $X$ be a Polish space and let $f:X\rightarrow X$ be an invertible transformation such that both $f$ and $f^{-1}$ are Lipschitz. Suppose that $\overline{\M^{co}(f)}=\M(f)$. Then,
  \[\{\mu\in\M(f)\mid h_\mu(f)=0\}\]
  is a residual subset of $\M(f)$.
  \end{teo}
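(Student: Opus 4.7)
The plan is to combine the pointwise entropy--dimension bound of Lemma~\ref{infsup}(i) with a Baire-category argument built on the density of periodic measures. Since $f$ is Lipschitz, fix $\Lambda>1$ with $d(f(x),f(y))\leq\Lambda\,d(x,y)$ for all $x,y\in X$. Lemma~\ref{infsup}(i) yields $\underline{h}_{\mu}(f,x)\leq \log\Lambda\cdot\underline{d}_{\mu}(x)$ for every $x\in X$ and every $\mu\in\M(f)$. The non-ergodic form of Brin--Katok's theorem, $h_{\mu}(f)=\int\underline{h}_{\mu}(f,x)\,d\mu(x)$ (obtained from the ergodic decomposition, the affineness of entropy, and the ergodic Brin--Katok theorem applied componentwise), together with Fatou's lemma and Proposition~\ref{BGT1}, then gives
\[
h_{\mu}(f)\;\leq\;\log\Lambda\int\underline{d}_{\mu}(x)\,d\mu(x)\;\leq\;\log\Lambda\cdot D^{-}_{\mu}(1)\;\leq\;\log\Lambda\cdot D^{-}_{\mu}(s),\qquad s\in(0,1).
\]
So it suffices to exhibit a residual set $R\subseteq\M(f)$ on which $D^{-}_{\mu}(s)=0$ for some fixed $s\in(0,1)$.

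Every periodic measure $\mu_{p}=\frac{1}{k}\sum_{i=1}^{k}\delta_{x_{i}}\in\M^{co}(f)$ satisfies $D^{-}_{\mu_{p}}(s)=0$: for $\ve$ smaller than half the minimum interatomic distance, $\mu_{p}(B(x_{i},\ve))=1/k$, so $I_{\mu_{p}}(s,\ve)=k^{1-s}$ is constant in $\ve$ and the defining $\liminf$ vanishes. Consequently $\M^{co}(f)\subseteq\{\mu\in\M(f):D^{-}_{\mu}(s)=0\}$, and the hypothesis $\overline{\M^{co}(f)}=\M(f)$ makes this zero-dimensional set dense in $\M(f)$.

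It remains to show $\{\mu:D^{-}_{\mu}(s)=0\}$ is a $G_\delta$ subset of $\M(f)$. Unfolding the $\liminf$ along countable scales,
\[
\{\mu:D^{-}_{\mu}(s)=0\}=\bigcap_{n\geq 1}\bigcap_{k\geq 1}\bigcup_{\ve\in\mathbb{Q}\cap(0,1/k)}\{\mu\in\M(f):I_{\mu}(s,\ve)<\ve^{-(1-s)/n}\},
\]
which reduces the problem to showing that each slice $\{\mu:I_{\mu}(s,\ve)<c\}$ is weakly open, i.e.\ that $\mu\mapsto I_{\mu}(s,\ve)$ is weakly upper semi-continuous. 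I would use the partition bound $I_{\mu}(s,\ve)\leq\sum_{A\in\xi}\mu(A)^{s}$, valid for any finite Borel partition $\xi$ of $\supp(\mu)$ with $\diam\xi<\ve$ (since $A\subseteq B(x,\ve)$ when $x\in A$, and $s-1<0$ reverses the resulting inequality), and then choose $\xi$ adapted to $\mu$ so that $\mu(\partial A)=0$ for each $A\in\xi$ and coarse enough that the bound is nearly tight. The map $\nu\mapsto\sum_{A\in\xi}\nu(A)^{s}$ is weakly continuous at such a $\mu$ by the Portmanteau theorem, yielding a weak neighbourhood of $\mu$ inside the slice. Granted this openness, $\{D^{-}_{\mu}(s)=0\}$ is a dense $G_\delta$, hence residual, and by the first paragraph every $\mu$ in it has $h_{\mu}(f)=0$.

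The principal obstacle is precisely the upper semi-continuity of $\mu\mapsto I_{\mu}(s,\ve)$: this functional is not weakly continuous because $\mu$ appears simultaneously as integrator and inside the integrand $\mu(B(x,\ve))^{s-1}$. The partition trick replaces it by a polynomial in the masses of finitely many fixed sets, where Portmanteau continuity holds, but making the bound uniformly tight near a given $\mu_0$ requires a careful $\mu_0$-adapted choice of partition together with quantitative refinements of the dimensional identities in Proposition~\ref{BGT1}. This is the step I would spend the most care on.
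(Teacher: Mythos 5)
Your reduction to a dimension statement and your periodic-orbit computation ($D^-_{\mu_p}(s)=0$, hence density of $\{\mu\mid D^-_\mu(s)=0\}$) are correct, but the proposal has two gaps, one repairable and one essential. The repairable one: you assert $h_\mu(f)\le\log\Lambda\cdot D^-_\mu(s)$ for \emph{every} $\mu\in\M(f)$ on the strength of a ``non-ergodic Brin--Katok identity'' $h_\mu(f)=\int\underline{h}_\mu(f,x)\,d\mu(x)$. On a non-compact Polish space this identity is not available: the only input valid in this generality (Riquelme's results \cite{Riquelme}, which is what Lemma~\ref{infsup} invokes) is the inequality $\underline{h}_\mu(f,x)\ge h_\mu(f)$ $\mu$-a.e.\ for \emph{ergodic} $\mu$. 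Your ergodic-decomposition sketch does not produce it, because $\underline{h}_\mu(f,x)$ is computed from $\mu$-masses of Bowen balls while the componentwise Brin--Katok statement controls $\mu_z$-masses; since $\mu(B(x,n,\ve))$ can greatly exceed $\mu_{z(x)}(B(x,n,\ve))$ (other components may charge the same ball), one only obtains $\underline{h}_\mu(f,x)\le\underline{h}_{\mu_{z(x)}}(f,x)$, which is the wrong direction. The fix is exactly the paper's first step: $\M_e(f)$ is residual ($\M^{co}(f)\subset\M_e(f)$ is dense by hypothesis, and $\M_e(f)$ is a $G_\delta$ by Theorem~2.1 of \cite{Parthasarathy1961}), so one may intersect the candidate set with $\M_e(f)$ and apply Lemma~\ref{infsup}(i) together with Proposition~\ref{BGT1} to ergodic measures only.

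The essential gap is the $G_\delta$ step. Openness of the slices $\{\mu\mid I_\mu(s,\ve)<c\}$ does not follow from your partition argument: the bound $I_\nu(s,\ve)\le\sum_{A\in\xi}\nu(A)^s$ and the Portmanteau continuity of $\nu\mapsto\sum_{A\in\xi}\nu(A)^s$ at a $\mu_0$ with $\mu_0$-null boundaries are both correct, but to place a weak neighborhood of $\mu_0$ inside the slice you would need $\sum_{A\in\xi}\mu_0(A)^s<c$, whereas you only know $I_{\mu_0}(s,\ve)<c$; these are not comparable, since any admissible partition must have $\diam\xi<\ve$ while its cells can carry far less $\mu_0$-mass than the $\ve$-balls around their points, and the discrepancy is unbounded --- ``coarse enough that the bound is nearly tight'' is precisely what cannot be arranged in general. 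Worse, on a non-compact Polish space the functional $\nu\mapsto I_\nu(s,\ve)$ is genuinely not weakly upper semi-continuous: taking $\nu_n=(1-a_n)\mu+a_n\rho_n$ with $a_n\to0$ and $\rho_n$ uniform over $m_n$ pairwise $2\ve$-separated points far from $\supp\mu$, one gets $\nu_n\to\mu$ weakly while $I_{\nu_n}(s,\ve)\ge a_n^s\,m_n^{1-s}$, which blows up if $m_n$ grows fast enough. So any proof of the $G_\delta$ property must use invariance and tightness, and this is exactly the content of Theorem~1.2 of \cite{AS2} --- the result the paper imports wholesale to prove Theorem~\ref{GENENT1}; it is a substantive theorem, not a step dispatched by Portmanteau. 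For Theorem~\ref{GENENT} itself the paper avoids $D^-_\mu(s)$ entirely: after the residuality of $\M_e(f)$, it quotes Propositions~2.2 and~2.5 of \cite{AS} to conclude that $\{\mu\in\M_e(f)\mid\dim_H^+(\mu)=0\}$ is generic, and then applies Lemma~\ref{infsup}(i). Density (your correct step) plus an unproven $G_\delta$ claim does not give residuality, so as written the proof is incomplete.
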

\begin{proof} Firstly, we note that $\M_e(f)$ is a generic subset of $\M(f)$. Namely, the measures in $\M^{co}(f)$ are obviously ergodic. Hence, $\overline{\M_e(f)}=\M(f)$. Since one has from Theorem~2.1 in~\cite{Parthasarathy1961} 
  that $\M_e(f)$ is a $G_\delta$ subset of $\M(f)$, the result follows.

 Thus, one gets from Propositions~2.2 and~2.5 in~\cite{AS} that $\{\mu\in\M_e(f)\mid\dim_H^+(\mu)=0\}$ is a generic subset of $\M(f)$ (although Proposition~2.2 in~\cite{AS} was proven for the full-shift system presented in Subsection~1.2, the result can be extended to the dynamical system $(X,f)$ considered here). The result is now a consequence of Lemma~\ref{infsup}(i).
\end{proof}

\begin{cor}\label{GENENTFS} Let $(X,f,\mathcal{B})$ be the full-shift dynamical system over $X =\prod_{i=-\infty}^{+\infty}M$, where the alphabet $M$ is a Polish space. Then, 
\[\{\mu\in\M(f)\mid h_\mu(f)=0\}\]
is a residual subset of $\M(f)$.
\end{cor}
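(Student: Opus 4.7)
The plan is to deduce Corollary~\ref{GENENTFS} directly from Theorem~\ref{GENENT} by verifying its three hypotheses for the full-shift dynamical system $(X,f)$ with $X=M^{\mathbb{Z}}$: (a) that $X$ is Polish; (b) that the shift $f$ is invertible with $f,f^{-1}$ Lipschitz; and (c) that $\overline{\M^{co}(f)}=\M(f)$. Once these are in place, Theorem~\ref{GENENT} instantly delivers the desired conclusion, and no further dimensional or entropic work is required.

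Hypothesis (a) is immediate: a countable product of Polish spaces is Polish. For hypothesis (b), I would fix a bounded compatible metric $\rho_M:=\min\{d_M,1\}$ on $M$ and equip $X$ with
\[
d(x,y)\;:=\;\sum_{i\in\mathbb{Z}} 2^{-|i|}\,\rho_M(x_i,y_i),
\]
which is a compatible metric making $X$ Polish. A routine telescoping using $2^{-|j-1|}\le 2\cdot 2^{-|j|}$ yields
\[
d(fx,fy)\;=\;\sum_{j\in\mathbb{Z}} 2^{-|j-1|}\rho_M(x_j,y_j)\;\le\;2\,d(x,y),
\]
and the same estimate applies to $f^{-1}$, so both maps are $2$-Lipschitz.

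The main obstacle is hypothesis (c): the density of periodic measures in $\M(f)$ for a full shift with Polish alphabet. My strategy would be a three-step approximation, which is well known when $M$ is compact and adapts to the Polish setting. First, given $\mu\in\M(f)$ and a weak neighborhood $U\ni\mu$, use the ergodic decomposition together with Choquet simplex structure of $\M(f)$ to approximate $\mu$ by a finite rational convex combination $\sum_{i=1}^{k}(n_i/N)\mu_{\omega_i}$ of ergodic components. Second, for each ergodic $\mu_{\omega_i}$, apply Birkhoff's ergodic theorem to produce a point $x^{(i)}\in X$ whose empirical measures $\frac{1}{m}\sum_{j=0}^{m-1}\delta_{\sigma^j x^{(i)}}$ converge weakly to $\mu_{\omega_i}$. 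Third, and this is where being a \emph{full} shift is essential (so arbitrary concatenations of symbols are admissible), construct a single periodic point $\tilde{x}\in X$ of period $mN$ by concatenating, within one period, $n_i$ blocks of length $m$ equal to $(x^{(i)}_0,\ldots,x^{(i)}_{m-1})$ for $i=1,\ldots,k$, and then extending periodically. For $m$ large, the associated periodic measure $\mu_{\tilde{x}}=\frac{1}{mN}\sum_{j=0}^{mN-1}\delta_{\sigma^{j}\tilde{x}}$ is close (in the weak topology) to $\sum_i (n_i/N)\mu_{\omega_i}$: a fraction $n_i/N$ of its atoms lie on iterates shadowing long initial segments of $x^{(i)}$, while only an $O(k/m)$ fraction of atoms suffer from boundary effects between blocks.

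The technical point to be careful with is that the weak topology on $\M(X)$ for Polish $X$ requires test functions that are bounded and continuous on all of $X$; the block-concatenation argument works because the chosen metric $d$ forces test functions to be insensitive to coordinates far from the origin, so the empirical measures along blocks genuinely approximate those of the original $x^{(i)}$. With (a)--(c) verified, Theorem~\ref{GENENT} applies verbatim and the residuality of $\{\mu\in\M(f)\mid h_\mu(f)=0\}$ in $\M(f)$ follows.
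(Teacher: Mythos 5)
Your proposal is correct and follows essentially the same route as the paper: Corollary~\ref{GENENTFS} is obtained by instantiating Theorem~\ref{GENENT} for the full shift, and your verifications of the three hypotheses (Polishness of $M^{\mathbb{Z}}$, the $2$-Lipschitz bound for $\sigma^{\pm 1}$ under the weighted sum metric, and density of periodic measures via ergodic decomposition, generic points, and block concatenation) are sound. The only difference is one of detail: the paper treats the density of $\M^{co}(f)$ for the full shift over a Polish alphabet as known (it is classical, going back to Parthasarathy/Sigmund), whereas you reprove it, correctly handling the non-compactness by testing against bounded uniformly continuous functions and absorbing block-boundary atoms into an $O(k/m)$ error.
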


Corollary~\ref{GENENTFS} generalizes Theorem~1 in~\cite{Sigmund1971} (originally proved for $M=\mathbb{R}$) for any Polish space.

We also have a version of Theorem~\ref{GENENT} for topological dynamical systems.

\begin{teo}\label{GENENT1}
  Let $(X,f)$ be a topological dynamical system such that $f$ is Lipschitz, and suppose that $\overline{\M^{co}(f)}=\M(f)$. Then,
  \[\{\mu\in\M(f)\mid h_\mu(f)=0\}\]
  is a residual subset of $\M(f)$.
  \end{teo}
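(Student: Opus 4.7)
The plan is to follow the three-step scheme used in the proof of Theorem~\ref{GENENT}, observing that neither invertibility of $f$ nor the Lipschitz property of $f^{-1}$ plays any essential role there — only the forward Lipschitz estimate, used via Lemma~\ref{infsup}(i), which is available under the present weaker hypotheses.

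First, I would establish that $\M_e(f)$ is residual in $\M(f)$. Every periodic measure in $\M^{co}(f)$ is ergodic, so the hypothesis $\overline{\M^{co}(f)}=\M(f)$ yields $\overline{\M_e(f)}=\M(f)$; combined with Theorem~2.1 in~\cite{Parthasarathy1961}, which asserts that $\M_e(f)$ is a $G_\delta$ subset of $\M(f)$, residuality follows.

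Second, I would invoke Propositions~2.2 and~2.5 of~\cite{AS}, extended from the full-shift context to the present dynamical system exactly as noted in the proof of Theorem~\ref{GENENT}, to conclude that $\{\mu\in\M_e(f)\mid\dim_H^+(\mu)=0\}$ is residual in $\M(f)$. Since $\dim_H^-(\mu)\le\dim_H^+(\mu)$, each such $\mu$ also satisfies $\dim_H^-(\mu)=0$. Third, let $\Lambda$ be a Lipschitz constant for $f$: if $\Lambda\le 1$, then $d_n\le d$, so $R(n,\varepsilon)$ is bounded independently of $n$ and $h(f)=0$, making the conclusion trivial; otherwise, Lemma~\ref{infsup}(i) gives, for every ergodic $\mu$,
\[
  \dim_H^-(\mu)\ \ge\ \frac{h_\mu(f)}{\log\Lambda},
\]
so that $\dim_H^-(\mu)=0$ forces $h_\mu(f)=0$. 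Hence the residual set constructed in the second step is contained in $\{\mu\in\M(f)\mid h_\mu(f)=0\}$, and the target set is residual.

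The main (and really only) obstacle I foresee is verifying that Propositions~2.2 and~2.5 of~\cite{AS}, originally stated for the full shift, genuinely extend to this setting. The structural feature these arguments should require is the ability to approximate arbitrary invariant measures by measures concentrated on periodic orbits of arbitrarily long period — precisely the standing hypothesis $\overline{\M^{co}(f)}=\M(f)$ — so the extension should be routine once the proofs in~\cite{AS} are inspected with this criterion in mind.
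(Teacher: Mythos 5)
Your overall scheme---produce a residual set of measures of zero dimension, then convert zero dimension into zero entropy via Lemma~\ref{infsup}(i)---is the same in spirit as the paper's, and your first and third steps are sound: $\M_e(f)$ is residual exactly as you argue, and for ergodic $\mu$ relation~\eqref{DH} turns $\dim_H^-(\mu)=0$ into $h_\mu(f)=0$ (your case split at $\Lambda\le 1$ is harmless but unnecessary: a $\Lambda$-Lipschitz map is $\Lambda'$-Lipschitz for any $\Lambda'>\max\{\Lambda,1\}$, so the lemma always applies; note also that your appeal to $h(f)=0$ via the variational principle silently uses compactness of $X$). The genuine gap is your second step. Propositions~2.2 and~2.5 of \cite{AS} are proved for the two-sided full shift, which is invertible and bi-Lipschitz, and the paper's proof of Theorem~\ref{GENENT} asserts their extension only to that same class (invertible, with $f$ and $f^{-1}$ Lipschitz, on a Polish space). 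Your justification---that invertibility and the Lipschitz property of $f^{-1}$ ``play no essential role''---is drawn from reading the visible part of the proof of Theorem~\ref{GENENT}, but those hypotheses are not consumed in that visible part; if they are used at all, they are used inside the extension of the propositions of \cite{AS}, which is precisely the step you have not inspected. The structure of the paper itself is evidence that this extension is not routine: the authors state Theorem~\ref{GENENT1} separately and prove it by a different route, rather than by remarking that the hypotheses of Theorem~\ref{GENENT} can simply be weakened.

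Indeed, the paper's proof of Theorem~\ref{GENENT1} bypasses your problematic step entirely: it invokes Theorem~1.2 of \cite{AS2}, which asserts, in exactly the present setting (a topological dynamical system with $\overline{\M^{co}(f)}=\M(f)$), that for each $q\in(0,1)$ the set $\{\mu\in\M(f)\mid D^-_\mu(q)=0\}$ is residual; Proposition~\ref{BGT1} then gives $\dim_H^+(\mu)\le D^-_\mu(q)=0$ on this set, and Lemma~\ref{infsup}(i) concludes as in your third step (after intersecting with the residual set $\M_e(f)$, as you do). In other words, the heavy lifting you defer to a ``routine'' inspection of \cite{AS} is exactly what the citation of \cite{AS2} was brought in to supply; without that reference, or without an actual proof that the full-shift arguments of \cite{AS} survive the loss of invertibility, your argument is incomplete at its only nontrivial point.
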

\begin{proof} Theorem~1.2 in~\cite{AS2} states that, for each $q\in(0,1)$, $\{\mu\in\M(f)\mid D^-_\mu(q)=0\}$ is a residual subset of $\M(f)$. The result is now a consequence of Proposition~\ref{BGT1} and Lemma~\ref{infsup}(i).
\end{proof}

Theorem~\ref{GENENT1} partially settles a conjecture posed by Sigmund in~\cite{Sigmund1974}, which states that if a topological dynamical system $(X,f)$ satisfies the specification property (and consequently, $\overline{\M^{co}(f)}=\M(f)$; see~\cite{Sigmund1974}), then $\{\mu\in\M(f)\mid h_\mu(f)=0\}$ is a residual subset of $\M(f)$.


\addcontentsline{toc}{chapter}{BIBLIOGRAPHY}
\bibliography{refs-1}

\end{document}